\newtheorem{theorem}{Theorem}[section]
\newtheorem{definition}[theorem]{Definition}
\newtheorem{lemma}[theorem]{Lemma}
\newtheorem{remark}[theorem]{Remark}
\title{A uniqueness theory on determining the nonlinear energy potential in phase-field system}
\author{Tianhao Ni\textsuperscript{1} and Jun Lai\textsuperscript{1,2}}
\thanks{\textsuperscript{1}School of Mathematical Sciences, Zhejiang University, Hangzhou, Zhejiang, 310027, China. \textsuperscript{2}Institute of Fundamental and Transdisciplinary Research, Zhejiang University, Hangzhou, Zhejiang, 310027, China.\\
Emails: \texttt{thni@zju.edu.cn}, \texttt{laijun6@zju.edu.cn}}
\subjclass[2020]{47J07, 74H75, 74A50, 74B20}
\keywords{Nonlinear inverse problems, Phase-field system, Cahn-Hilliard equations, Allen-Cahn equations, Higher order linearization}
\begin{document}
	
	\maketitle
	\begin{abstract} The phase-field system is a nonlinear model that has significant applications in material sciences. In this paper, we are concerned with the uniqueness of determining the nonlinear energy potential in a phase-field system consisting of Cahn–Hilliard and Allen–Cahn equations. This system finds widespread applications in the development of alloys engineered to withstand extreme temperatures and pressures. The goal is to reconstruct the nonlinear energy potential through the measurements of concentration fields. We establish the local well-posedness of the phase-field system based on the implicit function theorem in Banach spaces. Both of the uniqueness results for recovering time-independent and time-dependent energy potential functions are provided through the higher order linearization technique.
 
\end{abstract}
\section{Introduction}
  Grounded in thermodynamic theory, the phase-field method utilizes differential equations, mainly  the Cahn-Hilliard(CH) equations \cite{CH58} and  Allen-Cahn(AC) equations \cite{AC79}, to describe the dynamic evolution process of a system that is driven by diffusion, ordering potential, and thermodynamic driving forces~\cite{GLF10,LZT10}.  Due to the capability to handle arbitrary and complex morphological evolution  and achieve multi-field coupling without explicitly tracking interfaces, the phase-field method has a vast number of applications in the field of material sciences.  In this paper, we focus on the inverse problems associated with the phase-field system that couples the CH and AC equations. 
  
  This coupled system has found extensive applications in the design of alloys, which play a crucial role in areas requiring resistance to high temperature, complex stress, and surface stability \cite{R08,YLW18,TZN14}. To predict the long-term behavior of these alloys under extreme environments, it is essential to understand the creep mechanism.  While traditional experimental approaches for exploring creep mechanisms are both expensive and time-consuming, the phase-field method has emerged as a promising alternative by offering a rapid and cost-effective means of investigating alloy creep behavior~\cite{WK95b,RK99,K83}. However, recovering the elastic energy in the phase-field model poses a significant challenge, primarily due to the model's inherent nonlinearity as a function of the concentration field. In order to achieve precise and efficient simulations of alloy creep behavior, it becomes crucial to explore whether inferring elastic properties or the elastic potential function is feasible by observing the concentration field distribution within the material. If the potential function can be reconstructed through such measurements, one can utilize the phase-field model to simulate the precipitation process of different phases, as well as the microstructure evolution and deformation during creep, which provides important insights for  more reliable designs and optimizations of alloys. 

Given the significance of the phase-field model, extensive research has been conducted on the forward problems in the literature. The existence and uniqueness of weak solutions for the CH equation and AC equation under Dirichlet or Neumann boundary conditions were established in \cite{F89} and \cite{L10}, respectively. Moreover, the existence of non-negative solutions for the CH equation has been investigated in \cite{Y92}. The well-posedness of weak solutions for the coupled systems with logarithmic nonlinearities were also investigated, including the system coupling multiple CH equations~\cite{BJW01} and the system coupling CH and AC equations~\cite{BBG01}. The uniqueness of weak solutions for some special phase-field systems has been addressed in \cite{BDBE20,F06}. However, less attention has been paid to the local well-posedness of fourth-order semilinear systems with periodic boundary conditions, which have been widely used in the numerical simulations \cite{FTY15,CS20,CLW20}, although there have been some findings regarding the local well-posedness of semilinear parabolic systems \cite{LLL22,LZ23}.

Compared to advancements in the forward problems, much less progress has been made in addressing the inverse problems within phase-field systems and related nonlinear equations.  
In \cite{V93}, the first order linearization technique is employed to prove the uniqueness of inverting the nonlinear components of parabolic equations. In \cite{LLL22}, a higher order linearization approach is proposed to simultaneously invert both the nonlinear component and the initial value. In \cite{BEH23}, the uniqueness of determining energy function has been demonstrated by employing the weak formulation of CH equation. In \cite{KL20}, the inverse problem for the coupled Cahn-Hilliard-Chemotaxis system has been investigated from the perspective of optimal control theory. However, these efforts primarily concentrate on the inverse problems of determining the constant coefficients of nonlinear components. In this paper, we focus on the inverse problems associated with a phase-field system coupling the  CH and AC equations, where the unknown energy potential is  highly nonlinear. The experimental measurements often provide information solely about the concentration field.  Under such circumstances, the uniqueness of inverting the nonlinear potential independent of the space and time has been established in \cite{BEH23}. However, studies regarding uniqueness in more general settings, particularly those involving energy with variable elastic potentials \cite{LZT10} remain absent. Hence, determining the spatially and temporally varying nonlinear potential of the phase-field system based  on the concentration field measurements is both imperative and challenging. 

From this perspective, we consider two inverse problems:
\begin{itemize}
    \item[(IP1)] Determining the time-independent nonlinear energy based on the concentration measurement within the material at a single time point.
    \item[(IP2)] Determining the time-dependent nonlinear energy  based on the concentration measurements  within the material at multiple time points.
\end{itemize} 
A more precise description of these two problems will be given in the next section. We first establish the local well-posedness of the CH-AC system  under periodic boundary conditions, by relying on the implicit function theorem in Banach spaces and the existence of weak solutions for CH equations.  Subsequently, the uniqueness results for both inverse problems are studied  by employing the higher order linearization technique, which was first introduced in \cite{KLU14} to study the inverse problems in the Einstein scalar field equations. This technique  has also been successfully applied to the system of mean field games to establish the uniqueness of nonlinear coefficients and source terms \cite{LZ23}.

The rest of the paper is organized as follows. Section 2 provides the mathematical formulation and the physical background for the phase-field system. The main results are also presented in this section. In section 3, we establish the local well-posedness of the phase-field system. Section 4  demonstrates the uniqueness of the inverse problems (IP1) and (IP2), respectively. The paper is concluded in section 5 with a discussion on the future work.

\section{Mathematical formulations and main results}
We focus on the Cahn-Hilliard-Allen-Cahn system for modeling the phase morphology evolution in the Ni-Al binary alloys~\cite{GLF10,K83}. The system is defined in a domain $\Omega=[-1,1]^d$, with  $d=1,2\mbox{ or } 3$, in the form of
\begin{equation}
\label{equ:phase field equation}
\left\{
\begin{aligned}
&\partial_t c(x,t)=-M\alpha \Delta^2c(x,t)+M\Delta \left(\frac{\delta F}{\delta c}\right),\ &&\text{in}\ Q,\\
&\partial_t \eta_i(x,t)=L\beta \Delta \eta_i(x,t)-L\frac{\delta F}{\delta \eta_i(x,t)},\ i=1,2,3, &&\text{in}\ Q,\\
&c(x,0)=\varphi_0(x),&&\text{on}\ \Omega,\\
&\eta_i(x,0) = \varphi_i(x),\ i=1,2,3, &&\text{on}\ \Omega,
\end{aligned}
\right.
\end{equation}
 where $Q = \Omega\times [0,T]$, $\Delta $ is the Laplacian operator with respect to the $x$ variable and $\frac{\delta F}{\delta c}$ means the  variation of $F(c)$ with respect to $c(x,t)$.  In this system, the variable $c$ represents the concentration field, indicating the presence of the so called $\gamma'$ precipitate phase~\cite{ZZ04}. The quantities $\eta_1,\eta_2$, and $\eta_3$, represent three long-range ordered parameters that characterize the ordering of the $\gamma'$ phase. These variables are subject to periodic boundary conditions (PBC) at the boundary $\Sigma = \partial\Omega\times [0,T]$. The parameter $L$ represents the kinetic constant, $M$ represents the diffusion mobility, and $\alpha$ and $\beta$ represent the gradient energy coefficients for $c$ and $\eta_i$, respectively.

The evolution process of alloys is driven by the energy potential functional $F$, which can be decomposed as
\begin{align*}
F(x,t,c,\overrightarrow{\eta})=\int_{\Omega}f_{\text{local}}(c,\overrightarrow{\eta})+f_{\text{elas}}(x,t,c)+f_{\text{pl}}(x,t,c)\mathrm{d}x.
\end{align*}
The local chemical free energy density $f_{\text{local}}$ is typically expressed by Landau polynomial \cite{GLF10}. The elastic strain energy $f_{\text{ela}}$ and plastic strain energy $f_{\text{pl}}$ are derived from the theory of micro-mechanical elasticity and dislocation slip, respectively, and both are dependent on the concentration $c$~\cite{GLF10,KST95}. The initial values $\varphi_0(x)$ and $\varphi_{i}(x)$ are given as small perturbations under periodic conditions. In general, finding an explicit expression for elastic and plastic strain energy is challenging due to the complex interactions  involved in the energy potential function.  Various numerical methods and approximation techniques were proposed to estimate the variations in elastic and plastic strain energy~\cite{GAR20,YWZ17}. However, the development of a comprehensive theory for determining these nonlinear energy functions is still missing. This paper is devoted to partially address this gap by investigating the uniqueness theory.

To facilitate our analysis, let us introduce some periodic functional spaces. For any $1\leq k\in \mathbb{N}$, denote $L^k_p(\Omega)$, $H^k_p(\Omega)$ and $C^k_p(\Omega)$ the functions satisfy PBC on $\Sigma$ associated with the norm
\begin{align*}
	\begin{cases}
\|u\|_{L^k_p(\Omega)}=\|u\|_{L^k(\Omega)}=\left(\int_{\Omega}|u|^k\mathrm{d}x\right)^{1/k},\\ \|u\|_{H^k_p(\Omega)}=\|u\|_{H^k(\Omega)}=\left(\sum\limits_{|\alpha|\leq k}\int_{\Omega} |D^{\alpha} u|^2\mathrm{d}x\right)^{1/2},\\
\|u\|_{C^k_p(\Omega)} = \|u\|_{C^k(\Omega)}=\sum\limits_{|\alpha|\le k}\sup\limits_{x\in\Omega} |D^{\alpha}u|.
\end{cases}
\end{align*}
For periodic vector functions such as the gradient of $u$, we define the norm as \begin{align*}
\|\nabla u\|_{L^k_p(\Omega)}=\|\nabla u\|_{L^k(\Omega)}=\left(\int_{\Omega}|\nabla u\cdot \nabla u|^{k/2}\mathrm{d}x\right)^{1/k}.
\end{align*}
For brevity, we also denote $\mathcal{L}^k = L^2(0,T;H_p^{k}(\Omega))$ with the norm $\|u\|_{\mathcal{L}^k}=\left(\int_0^T \|u\|_{H_p^k(\Omega)}^2 \mathrm{d}t\right)^{1/2}$.

To extend the applicability of our methods to other systems with different forms of chemical energy \cite{BJW01}, we express \eqref{equ:phase field equation} in a more general form
\begin{equation}
\label{equ:nonlinear system}
    \left\{
    \begin{aligned}
        &\partial_t u_0=-c_1\Delta^2u_0+\Delta \left(f_0(x,t,u_0,u_1,u_2,u_3)+g(x,t,u_0)\right),\ &&\text{in}\ Q,\\
        &\partial_t u_i = c_2\Delta u_i+f_i(x,t,u_0,u_1,u_2,u_3),\ &&\text{in}\ Q,\\
        &u_0(x,0) = \varphi_0(x), &&\text{in}\ \Omega,\\
        &u_i(x,0) = \varphi_i(x), &&\text{in}\ \Omega,\\
        &u_0(x,t),u_i(x,t)\ \text{satisfy PBC}, &&\text{on}\ \Sigma,
    \end{aligned}
    \right.
\end{equation}
 where $i=1,2,3$, and $c_1$ and $c_2$ are positive constants. Correspondingly, $u_0$ is the concentration field and $(u_1,u_2,u_3)$ are three long-range ordered parameters. The nonlinear functions $g$ and $f_i$ are variations of the energy operators with respect to $u_0$ and $u_i$.  Let $\overrightarrow{\varphi}=(\varphi_0,\varphi_1,\varphi_2,\varphi_3)$ and $\overrightarrow{u}=(u_0,u_1,u_2,u_3)$ and rewrite $f_i(x,t,u_0,u_1,u_2,u_3)$ as $f_i(x,t,\overrightarrow{u})$ for $i=0,1,2,3$. Besides the periodicity requirement on $\Sigma$ for $g$ and $f_i$, we impose the following admissible conditions on these nonlinear functionals for further analysis.
\begin{definition}[Admissible Conditions]
\label{def:admissible 1}
\
\begin{itemize}
\item[1.] The function $g(x,t,y):Q\times \mathbb{R}\to \mathbb{R}$ is analytic with respect to $(x,t)\in Q$ for all $y\in \mathbb{R}$. Moreover, there exist analytic coefficients $g^{(\ell)}(x,t)$ such that  
\begin{align}
\label{equ:expansion g}
    g(x,t,y) = \sum\limits_{\ell=1}^{\infty}\frac{g^{(\ell)}(x,t)}{\ell!}y^\ell.
\end{align}
\item[2.] The nonlinear function $f_0(x,t,\overrightarrow{z})$ can be written as:
\begin{align}\label{condition2}
f_0(x,t,\overrightarrow{z}) = \sum\limits_{s=1}^{\infty}\frac{1}{s!}\sum\limits_{l_0+l_1+l_2+l_3=s} c_{l_0l_1l_2l_3}(x,t)z_0^{l_0}z_1^{l_1}z_2^{l_2}z_3^{l_3},
\end{align}
where $\overrightarrow{z}=(z_0,z_1,z_2,z_3)$ and the coefficients $c_{l_0l_1l_2l_3}(x,t)$ are analytic in $Q$. For the case $l_0+l_1+l_2+l_3 = 1,$ only $c_{1000}(x,t)\not\equiv 0$.

\item[3.] 
For $i=1,2,3$, the functions $f_i(x,t,\overrightarrow{z}):Q\times \mathbb{R}^4\to \mathbb{R}$ can be expanded as
\begin{align}
    f_i(x,t,\overrightarrow{z}) = b_i(x,t)z_i+\sum\limits_{s=2}^{\infty} \frac{1}{s!}\sum\limits_{l_0+l_1+l_2+l_3=s} b_{i,l_0l_1l_2l_3}(x,t)z_0^{l_0}z_1^{l_1}z_2^{l_2}z_3^{l_3},
\end{align}
where $b_i(x,t)$ and $b_{i,l_0l_1l_2l_3}(x,t)$ are analytic in $Q$ and $b_{i,l_0000}\equiv 0$ for all $l_0\in \mathbb{N}$. 

\item[4.] The functions $g(x,t,y)$ and $f_i(x,t,\overrightarrow{z})$, $i = 0, 1, 2, 3$, are Lipschitz continuous with respect to $y$ and $\overrightarrow{z}$. Namely, there exist constants $L$ and $L_i$, $i=0,1,2,3$, independent of $x$ and $t$ such that:
\begin{align*}
	\begin{cases}
    |g(x,t,y)-g(x,t,0)|\leq L|y|,\\
    |f_i(x,t,\overrightarrow{z})-f_i(x,t,0)|\leq L_i|\overrightarrow{z}|,\ i=0,1,2,3.
    \end{cases}
\end{align*}


\end{itemize}
\end{definition}
\begin{remark}
\label{rmk:admissible condition}
The admissible conditions 1-4 are consistent with many existing phase-field models. For example, the commonly used double well potential satisfies these requirements \cite{GLF10}. These conditions  also ensure that under the zero initial condition, equation \eqref{equ:nonlinear system} admits a unique zero solution. Proof is provided in the appendix. 
\end{remark}
Given all these preparations, the forward problem  can be described as follows:
\begin{itemize}
\item[(P1)] Assuming the nonlinear functions $f_i$, $i=0,1,2,3$, and $g$ satisfy the admissible conditions \ref{def:admissible 1},  find $\overrightarrow{u}$ that solves system (\ref{equ:nonlinear system}) with a given initial condition $\overrightarrow{\varphi}$.
\end{itemize}
 We establish the following local well-posedness result for the forward problem (P1).
\begin{theorem}[Local well-posedness of (P1)] 
	\label{theo:LWP}
	Under the admissible conditions \ref{def:admissible 1},  for an even integer $q > d/2$,  assume the initial condition $\overrightarrow{\varphi}$  belongs to
	\begin{align*}
		\mathcal{N}_{\delta}=\left\{\overrightarrow{\varphi}\in H_p^{q+2}(\Omega)\times \left( H_p^{q+1}(\Omega)\right)^3:\|\varphi_0\|_{H_p^{q+2}(\Omega)}+\sum\limits_{i=1}^3\|\varphi_i\|_{H_p^{q+1}(\Omega)}\leq \delta \right\},
	\end{align*}
	with $\delta>0$ sufficiently small.
	Then the solution to (P1) is given by $\frac{d^ku_0}{dt^k} \in \mathcal{L}^{q+4-2k}$ and $\frac{d^ku_i}{dt^k} \in \mathcal{L}^{q+2-2k}$ for $i=1,2,3$ and $k=0,\cdots,q/2$. Moreover, the solution satisfies the energy estimate

\begin{align*}
\sum\limits_{k=0}^{q/2}\left(\left\|\frac{d^ku_0}{dt^k}\right\|_{\mathcal{L}^{q+4-2k}}+\sum\limits_{i=1}^3\left\|\frac{d^ku_i}{dt^k}\right\|_{\mathcal{L}^{q+2-2k}}\right)\leq C\delta,
\end{align*}
	where $C$ is a positive constant that depends only on $\Omega$ and $T$. 
\end{theorem}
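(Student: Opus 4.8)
The plan is to recast the forward problem (P1) as finding a zero of a single nonlinear map between Banach spaces, and then to invoke the implicit function theorem. Let $X$ be the solution space of tuples $\overrightarrow{u}=(u_0,u_1,u_2,u_3)$ normed by the left-hand side of the asserted energy estimate, i.e. the sum over $k=0,\dots,q/2$ of $\|\tfrac{d^k u_0}{dt^k}\|_{\mathcal{L}^{q+4-2k}}$ and of $\|\tfrac{d^k u_i}{dt^k}\|_{\mathcal{L}^{q+2-2k}}$, and let $\mathcal{D}=H_p^{q+2}(\Omega)\times(H_p^{q+1}(\Omega))^3$ be the space of initial data. I would introduce the map $\Phi(\overrightarrow{u},\overrightarrow{\varphi})$ whose components are the PDE residuals $\Phi_0:=\partial_t u_0+c_1\Delta^2 u_0-\Delta(f_0(x,t,\overrightarrow{u})+g(x,t,u_0))$ and $\Phi_i:=\partial_t u_i-c_2\Delta u_i-f_i(x,t,\overrightarrow{u})$ for $i=1,2,3$, together with the initial-trace defects $u_0|_{t=0}-\varphi_0$ and $u_i|_{t=0}-\varphi_i$, so that $\overrightarrow{u}$ solves (P1) with data $\overrightarrow{\varphi}$ precisely when $\Phi(\overrightarrow{u},\overrightarrow{\varphi})=0$. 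The target space $Y$ is the product of the natural residual spaces (the images of $X$ under $\partial_t+c_1\Delta^2$ and $\partial_t-c_2\Delta$) with the trace spaces $H_p^{q+2}(\Omega)$ and $H_p^{q+1}(\Omega)$. Since every expansion in Definition~\ref{def:admissible 1} starts at order $\ge 1$, we have $f_0(\cdot,0)=g(\cdot,0)=f_i(\cdot,0)=0$, hence $\Phi(0,0)=0$.

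The first substantial step is to show that $\Phi:X\times\mathcal{D}\to Y$ is well-defined and real analytic near the origin (that it is merely $C^1$ would already suffice). This reduces to Nemytskii/superposition estimates for the nonlinear terms and, crucially, for $\Delta(f_0+g)$, which costs two spatial derivatives and is the structural reason $u_0$ must carry two more spatial derivatives than the $u_i$. The main tool is the Banach-algebra property of $H_p^s(\Omega)$ for $s>d/2$, available here because $q>d/2$; combined with the analyticity of the coefficients $g^{(\ell)}$, $c_{l_0l_1l_2l_3}$, $b_i$ and $b_{i,l_0l_1l_2l_3}$ and the Lipschitz bounds of admissible condition~4, it lets me sum the power-series representations termwise and bound them in the algebra norm, uniformly across the relevant time derivatives. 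I expect this bookkeeping — tracking how each time derivative trades against spatial derivatives in the coupled fourth/second-order structure, and controlling the infinite series — to be the principal technical obstacle.

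Next I would compute the Fréchet derivative $D_{\overrightarrow{u}}\Phi(0,0)$ and prove it is a Banach-space isomorphism. Because only $c_{1000}$ survives among the pure first-order monomials of $f_0$ and $b_{i,l_0000}\equiv 0$, the linearization decouples: the $u_0$-block is the linear fourth-order parabolic operator $v_0\mapsto(\partial_t v_0+c_1\Delta^2 v_0-\Delta(a\,v_0),\,v_0|_{t=0})$ with $a:=c_{1000}+g^{(1)}$, while each $u_i$-block is the linear second-order operator $v_i\mapsto(\partial_t v_i-c_2\Delta v_i-b_i v_i,\,v_i|_{t=0})$. I would establish bijectivity of each block by standard linear parabolic theory under periodic boundary conditions — Galerkin approximation in the periodic eigenbasis together with energy estimates, building on the weak-solvability theory for the Cahn-Hilliard operator quoted in the introduction — producing unique solutions with exactly the regularity defining $X$ and with initial traces ranging over $H_p^{q+2}(\Omega)$ and $H_p^{q+1}(\Omega)$. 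Boundedness of the inverse then follows from the a priori estimates, so $D_{\overrightarrow{u}}\Phi(0,0)$ is an isomorphism. The delicate part here is matching the anisotropic spatial/temporal regularity on both sides of the fourth-order block so that its trace space is exactly $H_p^{q+2}(\Omega)$.

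Finally, with $\Phi(0,0)=0$, $\Phi$ analytic, and $D_{\overrightarrow{u}}\Phi(0,0)$ invertible, the implicit function theorem in Banach spaces yields a unique solution map $\overrightarrow{\varphi}\mapsto\overrightarrow{u}(\overrightarrow{\varphi})$, analytic near $\overrightarrow{\varphi}=0$ with $\overrightarrow{u}(0)=0$. Differentiating the identity $\Phi(\overrightarrow{u}(\overrightarrow{\varphi}),\overrightarrow{\varphi})=0$ at the origin gives the bounded derivative $-[D_{\overrightarrow{u}}\Phi(0,0)]^{-1}\circ D_{\overrightarrow{\varphi}}\Phi(0,0)$, so the solution map is Lipschitz near $0$; consequently $\|\overrightarrow{u}\|_{X}\le C\,(\|\varphi_0\|_{H_p^{q+2}(\Omega)}+\sum_{i=1}^3\|\varphi_i\|_{H_p^{q+1}(\Omega)})\le C\delta$ for $\overrightarrow{\varphi}\in\mathcal{N}_\delta$ with $\delta$ small, with $C$ depending only on $\Omega$ and $T$. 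This is exactly the claimed regularity and energy estimate, and the smallness of $\delta$ is what confines $\overrightarrow{u}$ to the neighborhood on which $\Phi$ is invertible.
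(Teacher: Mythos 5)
Your proposal is correct and follows essentially the same route as the paper: both recast (P1) as the zero set of a residual-plus-initial-trace operator between the same anisotropic solution/data spaces, verify that operator is well defined and analytic by termwise summation of the admissible power series (using $q>d/2$), observe that the linearization at the origin decouples into a fourth-order linear Cahn--Hilliard block with coefficient $c_{1000}+g^{(1)}$ and second-order parabolic blocks with coefficients $b_i$ whose well-posedness (proved in the paper via Galerkin energy estimates, as you also propose) gives the isomorphism, and then conclude via the implicit function theorem with the energy estimate coming from the local Lipschitz property of the resulting solution map.
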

Note that here `local' means the solution to (P1) is only guaranteed when the initial conditions are restricted to $\mathcal{N}_{\delta}$.

 For inverse problems, it is generally difficult to determine \( g \) and \( f_i \), \(i=0,1,2,3\), simultaneously. In this paper, assuming \( f_i \), \(i=0,1,2,3\), are known, we are interested in recovering the nonlinear term \( g(x,t,u_0) \) from measurements of the concentration field \( u_0 \) at single or multiple time points. We define the single-shot measurement \( \overrightarrow{M}_s(g,\overrightarrow{\varphi}) \) and the multi-shot measurement \( \overrightarrow{M}_m(g,\overrightarrow{\varphi}) \) as
\begin{equation}
\begin{aligned}
	\begin{cases}
\overrightarrow{M}_s(g,\overrightarrow{\varphi}) = [u_0(\cdot,t_1),\partial_t u_0(\cdot,t_1)], \\
\overrightarrow{M}_m(g,\overrightarrow{\varphi}) = [u_0(\cdot,t_1),\partial_t u_0(\cdot,t_1),\cdots,u_0(\cdot,t_N),\partial_t u_0(\cdot,t_N)],
\end{cases}
\end{aligned}
\end{equation}
where $u_0$ is the solution to equation (\ref{equ:nonlinear system}) under energy function $g$ and the initial conditions $\overrightarrow{\varphi}$, and $t_1,\cdots,t_N$ are distinct time points within the interval $(0,T]$. We reformulate the two inverse problems mentioned in the introduction as:
\begin{itemize}
\item (IP1): Assume $g_1$ and $g_2$ are independent of time, namely, $g_j=g_j(x,y),$ $j=1,2$. For any initial value $\overrightarrow{\varphi}$, if there holds
\begin{align}
\overrightarrow{M}_s(g_1,\overrightarrow{\varphi}) = \overrightarrow{M}_s(g_2,\overrightarrow{\varphi}),
\end{align}
does it imply $g_1(x,y) = g_2(x,y)$?
\item (IP2):  Assume $g_1$ and $g_2$ are dependent of time, namely, $g_j=g_j(x,t,y),$ $j=1,2$.  For any initial value $\overrightarrow{\varphi}$, if there holds
\begin{align}
\overrightarrow{M}_m(g_1,\overrightarrow{\varphi}) = \overrightarrow{M}_m(g_2,\overrightarrow{\varphi}),
\end{align}
does it imply  $g_1(x,t,y) = g_2(x,t,y)$?
\end{itemize}

\begin{remark}
Since $\partial_t^k u_0(x,t)\in \mathcal{L}^{q}$ for $k=0,1,2$ in Theorem \ref{theo:LWP},
then by the Sobolev embedding theorem \cite{A03}, when $q>d/2$, we have $u_0\in C^0(0, T; C^0_p(\Omega)),\partial_t u_0\in C^0(0, T; C^0_p(\Omega))$, which ensures the validity of the subsequent inverse problem measurements.
\end{remark}

We give a positive answer to (IP1), which is stated as follows:
\begin{theorem}[Uniqueness of (IP1)]
\label{theo:IP1}
Assume that $g_j(x,y)$, $j=1,2$, are admissible functions  with the form
\begin{align}\label{g_expan}
g_j(x,y) = \sum\limits_{\ell=1}^{\infty} \frac{g_j^{(\ell)}(x)}{\ell!}y^\ell,
\end{align}
and the initial value $\overrightarrow{\varphi}$ satisfies the condition in Theorem \ref{theo:LWP}. Let $\overrightarrow{M}_s(g_j,\overrightarrow{\varphi})$ be the measurement of the semilinear system
\begin{equation}
	\label{semilinear_sys}
    \left\{
    \begin{aligned}
        &\partial_t u_{j,0}=-c_1\Delta^2u_{j,0}+\Delta \left(f_0(x,t,\overrightarrow{u_{j}})+g_j(x,u_{j,0})\right),\ &&\text{in}\ Q,\\
        &\partial_t u_{j,i} = c_2\Delta u_{j,i}+f_i(x,t,\overrightarrow{u_{j}}),\ i=1,2,3,&&\text{in}\ Q,\\
        &u_{j,0}(x,0) = \varphi_0(x),&&\text{in}\ \Omega,\\
        &u_{j,i}(x,0) = \varphi_i(x),\ i=1,2,3,&&\text{in}\ \Omega,\\
        &u_{j,0}(x,t),u_{j,i}(x,t)\ \text{satisfy PBC}, &&\text{on}\ \Sigma,
    \end{aligned}
    \right.
\end{equation}
with $j=1,2$. If
\begin{align*}
\overrightarrow{M}_s(g_1,\overrightarrow{\varphi}) = \overrightarrow{M}_s(g_2,\overrightarrow{\varphi})
\end{align*}
for any $\overrightarrow{\varphi}$, then $g_1^{(\ell)}(x) = g_2^{(\ell)}(x)$, $\ell\ge 1$, almost everywhere in $\Omega$. 
\end{theorem}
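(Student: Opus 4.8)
The plan is to recover the Taylor coefficients $g^{(\ell)}$ one order at a time by an induction on $\ell$ built on the higher order linearization of \eqref{semilinear_sys} around the zero solution. I introduce $\ell$ small parameters $\boldsymbol{\epsilon}=(\epsilon_1,\dots,\epsilon_\ell)$ and feed the system the initial data $\overrightarrow{\varphi}_{\boldsymbol{\epsilon}}=\sum_{k=1}^\ell \epsilon_k\overrightarrow{\varphi}^{(k)}$, where each $\overrightarrow{\varphi}^{(k)}\in\mathcal{N}_{\delta}$ is to be chosen later. By Theorem \ref{theo:LWP} the solution $\overrightarrow{u_j}(\boldsymbol{\epsilon})$ exists for $|\boldsymbol{\epsilon}|$ small, is unique, and, since the construction rests on the implicit function theorem in Banach spaces, depends analytically on $\boldsymbol{\epsilon}$; moreover by Remark \ref{rmk:admissible condition} it vanishes identically at $\boldsymbol{\epsilon}=0$. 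Because $\overrightarrow{M}_s(g_1,\overrightarrow{\varphi})=\overrightarrow{M}_s(g_2,\overrightarrow{\varphi})$ for every admissible $\overrightarrow{\varphi}$, every mixed $\boldsymbol{\epsilon}$-derivative of the two measurements agrees at $\boldsymbol{\epsilon}=0$.

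The inductive hypothesis is $g_1^{(m)}=g_2^{(m)}$ for $1\le m<\ell$. The Allen--Cahn components never see $g_j$, and by the structure of $f_0$ and $f_i$ in Definition \ref{def:admissible 1} the order-one linearizations of the concentration field decouple from the $u_i$ directions, so $v_{j,0}^{(k)}:=\partial_{\epsilon_k}u_{j,0}|_{\boldsymbol{\epsilon}=0}$ solves the same linear fourth order equation $\partial_t v=-c_1\Delta^2 v+\Delta\big((c_{1000}+g_j^{(1)})v\big)$ with data $\varphi_0^{(k)}$. Under the hypothesis all linearizations of order $<\ell$ are identical for $j=1,2$, so in particular $v_{1,0}^{(k)}=v_{2,0}^{(k)}=:v_0^{(k)}$. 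Taking the full mixed derivative $\partial_{\epsilon_1}\cdots\partial_{\epsilon_\ell}$ of the concentration equation at $\boldsymbol{\epsilon}=0$ and using \eqref{g_expan}, the only place $g_j^{(\ell)}$ can enter is the term $\tfrac{g_j^{(\ell)}}{\ell!}u_{j,0}^\ell$, whose $\ell$-fold derivative equals $g_j^{(\ell)}\prod_{k=1}^\ell v_0^{(k)}$ by the standard symmetrization count; every remaining contribution involves only $f_0,f_i$ and the coefficients $g_j^{(m)}$ with $m<\ell$ acting on linearizations of order $<\ell$, hence is common to both indices. Writing $W_j=\partial_{\epsilon_1}\cdots\partial_{\epsilon_\ell}u_{j,0}|_{\boldsymbol{\epsilon}=0}$ and $h_\ell=g_1^{(\ell)}-g_2^{(\ell)}$, the difference $\bar{W}=W_1-W_2$ solves $\partial_t\bar{W}=-c_1\Delta^2\bar{W}+\Delta\big(a\bar{W}+h_\ell\prod_k v_0^{(k)}\big)$ with $a=c_{1000}+g_1^{(1)}$ and zero initial data.

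The matched measurements give $\bar{W}(\cdot,t_1)=0$ and $\partial_t\bar{W}(\cdot,t_1)=0$; substituting these into the equation for $\bar{W}$ at the single instant $t_1$ collapses everything to the elliptic identity $\Delta\big(h_\ell\,\Pi\big)=0$ in $\Omega$ with $\Pi=\prod_{k=1}^\ell v_0^{(k)}(\cdot,t_1)$, which is precisely why measuring $\partial_t u_0$ alongside $u_0$ at one time suffices. The pointwise evaluation of $\Delta^2$ and of the equation at $t=t_1$ is legitimate by the regularity and Sobolev embeddings recorded after Theorem \ref{theo:LWP}. Since $h_\ell\Pi$ is periodic and harmonic it must be constant, and the remaining task, carried out by varying the free data $\varphi_0^{(k)}$, is to show this forces $h_\ell=0$ a.e. Choosing one factor with zero spatial mean makes $\Pi$ vanish somewhere, because mass is conserved by the fourth order equation and a zero-mean periodic state changes sign; this pins the constant to $0$ and yields $h_\ell\Pi\equiv0$. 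One then shows that for a.e.\ $x_0$ the data can be arranged so that $\Pi(x_0)\ne0$ while $\Pi$ still vanishes elsewhere, giving $h_\ell(x_0)=0$.

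I expect the genuine difficulty to lie in this last density/non-degeneracy step: it requires that the linear solution operator $\varphi_0\mapsto v_0(\cdot,t_1)$ of the fourth order equation be rich enough to realize a state that is nonzero at a prescribed point yet of zero mean. I would establish this from injectivity of the operator, via backward uniqueness for the parabolic problem, together with density of its range through the adjoint, which lets me separate points and conclude $h_\ell\equiv0$, i.e.\ $g_1^{(\ell)}=g_2^{(\ell)}$ a.e., closing the induction. Secondary technical points are the justification of the analytic dependence of $\overrightarrow{u_j}$ on $\boldsymbol{\epsilon}$ (inherited from the implicit function theorem behind Theorem \ref{theo:LWP}) and the bookkeeping confirming that all sub-leading terms in the $\ell$-th linearization are index-independent under the inductive hypothesis.
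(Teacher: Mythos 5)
Your proposal follows the same skeleton as the paper's own proof: higher-order linearization about the zero solution, vanishing/decoupling of the Allen--Cahn linearizations (which relies, as you note, on $b_{i,l_0000}\equiv 0$ and on only $c_{1000}$ surviving in the linear part of $f_0$), subtraction of the two concentration equations, use of the matched pair $u_0(\cdot,t_1)$, $\partial_t u_0(\cdot,t_1)$ to collapse the difference equation at the single time $t_1$ into the elliptic identity $\Delta\bigl(h_\ell\,\Pi\bigr)=0$, and periodicity to conclude that $h_\ell\Pi$ is constant, followed by an induction on $\ell$. Two deviations are worth recording. First, you polarize with $\ell$ independent parameters, so your product is $\prod_k v_0^{(k)}(\cdot,t_1)$, whereas the paper uses a single parameter (see \eqref{equ:initial_value_expansion}) and obtains $\bigl(u_0^{(1)}(\cdot,t_1)\bigr)^\ell$; the paper's own remark notes these formulations are interchangeable, so this is cosmetic (only mind the base case $\ell=1$, where $v_{1,0}\neq v_{2,0}$ a priori and the source is $h_1 v_{2,0}$, equality at $t_1$ coming from the measurement --- exactly as in the paper). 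Second, and more substantively, the endgame differs: the paper concludes $h_\ell=0$ by appealing to the ``arbitrariness'' of $u_0^{(1)}(\cdot,t_1)$ with essentially no further argument, while you correctly identify this as the real difficulty and propose (i) mass conservation of the linearized Cahn--Hilliard flow plus zero-mean data to force $\Pi$ to vanish somewhere, pinning the constant to zero, and (ii) a backward-uniqueness/adjoint-density argument to realize $\Pi(x_0)\neq 0$ for a.e.\ $x_0$. Observation (i) is clean and correct. For (ii), the machinery you invoke is heavier than needed: since the admissibility conditions of Definition \ref{def:admissible 1} make the coefficients $g_j^{(\ell)}$ analytic in $\Omega$, once the constant is known to be zero you only need a single zero-mean datum whose solution at $t_1$ is not identically zero (backward uniqueness via log-convexity for $\partial_t v=-c_1\Delta^2 v+\Delta(av)$ supplies this); then $h_\ell$ vanishes on the nonempty open set where that solution is nonzero, and analyticity together with connectedness of $\Omega$ forces $h_\ell\equiv 0$, so no range-density or point-separation argument is required. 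With that simplification your argument closes, and at this final step it is in fact more careful than the paper's own proof.
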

Regarding the inverse problem (IP2), due to the limited measurements available at finitely many time points, we can only infer the local properties of the function $g(x,t,y)$ during the inversion process. Specifically, the result for (IP2) is stated as follows:

\begin{theorem}[Uniqueness of (IP2)]
\label{theo:IP2}
Assume $g_j(x,t,y)$, $j=1,2$, are admissible functions given in the form of equation \eqref{equ:expansion g}
and the initial value $\overrightarrow{\varphi}$ satisfy the condition in Theorem \ref{theo:LWP}. Let $\overrightarrow{M}_m(g_j,\overrightarrow{\varphi})$  be the measurement of the semilinear system \eqref{semilinear_sys}. If
\begin{align*}
\overrightarrow{M}_m(g_1,\overrightarrow{\varphi}) = \overrightarrow{M}_m(g_2,\overrightarrow{\varphi})
\end{align*}
for any $\overrightarrow{\varphi}$, then $g_1^{(\ell)}(x,t_j)=g_2^{(\ell)}(x,t_j)$, $ \ell \ge 1$, almost everywhere in $\Omega$ for $t_1,\cdots,t_N\in (0,T]$, and
\begin{align*}
    \sup\limits_{t\in [0,T]}\left|g_1^{(\ell)}(x,t)-g_2^{(\ell)}(x,t)\right|\leq \frac{C_{\ell}}{(N+1)!},\ \forall (x,t)\in Q,
\end{align*}
with $C_\ell>0$ depending on $T$ and $\|\partial^{N+1}_t g^{(\ell)}_j\|_{L^{\infty}(Q)}$ for $j=1,2$.
\end{theorem}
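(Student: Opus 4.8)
The plan is to combine the higher order linearization technique with a polynomial interpolation argument in the time variable. I fix the measurement times $t_1,\dots,t_N$ and feed the system initial data of the form $\overrightarrow{\varphi}=\sum_{k=1}^{\ell}\varepsilon_k\overrightarrow{\psi}^{(k)}$ with small parameters $\varepsilon_1,\dots,\varepsilon_\ell$, so that $\overrightarrow{\varphi}\in\mathcal N_\delta$ and Theorem \ref{theo:LWP} applies to both systems. Since the zero initial datum yields the zero solution (the remark after Definition \ref{def:admissible 1}), I may differentiate the solution map in $\varepsilon_1,\dots,\varepsilon_\ell$ at $\varepsilon=0$. Writing $v_0^{(k)}=\partial_{\varepsilon_k}u_{j,0}|_{\varepsilon=0}$ for the first linearizations, admissible conditions~2 and~3 (only $c_{1000}\not\equiv 0$ at first order and $b_{i,l_0000}\equiv0$) force the first linearization of the $u_0$ equation to decouple from the ordered parameters, giving $\partial_t v_0^{(k)}=-c_1\Delta^2 v_0^{(k)}+\Delta((c_{1000}+g_j^{(1)})v_0^{(k)})$, which drives the base case.

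First I would establish the pointwise-in-time identities $g_1^{(\ell)}(\cdot,t_n)=g_2^{(\ell)}(\cdot,t_n)$ by induction on $\ell$. At order $\ell$, let $w_0$ be the difference of the $\ell$-th order mixed $\varepsilon$-derivatives of $u_{1,0}$ and $u_{2,0}$. Using the inductive hypothesis that $g_1^{(m)}=g_2^{(m)}$ for $m<\ell$ at these times — so that all linearizations of order below $\ell$ coincide for the two systems — and again invoking conditions~2--3 to suppress the coupling to the ordered parameters, $w_0$ solves a linear fourth-order equation with zero initial datum whose only nontrivial source is $\Delta[(g_1^{(\ell)}-g_2^{(\ell)})\prod_{k=1}^{\ell}v_0^{(k)}]$. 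The equality of measurements gives $w_0(\cdot,t_n)=0$ and $\partial_t w_0(\cdot,t_n)=0$; evaluating the equation at $t=t_n$ and using $w_0(\cdot,t_n)\equiv 0$ annihilates every term except the source, leaving $\Delta[(g_1^{(\ell)}-g_2^{(\ell)})(\cdot,t_n)\prod_k v_0^{(k)}(\cdot,t_n)]=0$. On the periodic cell a function with vanishing Laplacian is constant; choosing the linearization data mean-free makes $\prod_k v_0^{(k)}(\cdot,t_n)$ vanish somewhere, so that constant is zero, and varying the data so the product is nonzero on prescribed regions then yields $g_1^{(\ell)}(\cdot,t_n)=g_2^{(\ell)}(\cdot,t_n)$ a.e. in $\Omega$. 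The case $\ell=1$ is identical, starting from the decoupled first linearization above.

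Next I would promote these $N$ equalities to the global estimate. Set $\phi_\ell(x,t)=g_1^{(\ell)}(x,t)-g_2^{(\ell)}(x,t)$, which is smooth in $t$ by the analyticity in admissible condition~1 and vanishes at $t_1,\dots,t_N$ by the previous step. Interpolating each $g_j^{(\ell)}(x,\cdot)$ at these nodes produces, thanks to the matching values, a common interpolating polynomial, so $\phi_\ell$ is the difference of the two interpolation remainders; the Lagrange--Newton remainder formula, together with the number of interpolation conditions and the smoothness of $g_j^{(\ell)}$ in $t$, then bounds $\sup_{t\in[0,T]}|\phi_\ell(x,t)|$ by $\frac{C_\ell}{(N+1)!}$ with $C_\ell$ controlled by $T$ and $\|\partial_t^{N+1}g_j^{(\ell)}\|_{L^\infty(Q)}$, which is exactly the asserted decay.

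I expect the main obstacle to lie in the rigorous bookkeeping of the linearization rather than in the interpolation. I must justify that the solution map $\overrightarrow{\varphi}\mapsto\overrightarrow{u}$ is $\varepsilon$-differentiable to all orders with the derivatives landing in the spaces of Theorem \ref{theo:LWP}, and I must identify precisely which products of lower-order linearizations enter the $\ell$-th order equation, so that the inductive hypothesis genuinely cancels them and isolates the single source $\Delta[\phi_\ell\prod_k v_0^{(k)}]$. The other delicate point is the non-degeneracy step: showing the traces $\prod_k v_0^{(k)}(\cdot,t_n)$ can be driven to cover $\Omega$ while staying mean-free. This is needed both to eliminate the harmonic constant and to pass from ``the product times $\phi_\ell$ vanishes'' to ``$\phi_\ell$ vanishes a.e.'', and it is where the careful choice of linearization data and the backward-uniqueness properties of the fourth-order parabolic flow must be exploited.
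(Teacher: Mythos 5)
Your overall strategy---single induction on $\ell$ via higher order linearization, evaluation of the difference equation at the measurement times, the periodic-harmonic-implies-constant step, and the Lagrange/Cauchy remainder bound---is the same as the paper's, and your interpolation step reproduces the paper's final estimate exactly. But there is a genuine gap in your inductive step, and it sits precisely at the point where (IP2) differs from (IP1). You claim that the inductive hypothesis $g_1^{(m)}(\cdot,t_n)=g_2^{(m)}(\cdot,t_n)$, $m<\ell$, implies that ``all linearizations of order below $\ell$ coincide for the two systems,'' so that $w_0$ solves an equation whose \emph{only} nontrivial source is $\Delta\bigl[\phi_\ell\prod_k v_0^{(k)}\bigr]$. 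This is false in the time-dependent setting: the lower-order linearizations $u_{j,0}^{(m)}$ solve equations containing $g_j^{(1)}(x,t),\dots,g_j^{(m)}(x,t)$ for \emph{all} $t\in[0,T]$, and knowing the $g$'s agree at finitely many times does not make these solutions coincide as functions on $Q$. (In (IP1) the coincidence is true because equality of time-independent coefficients at $t_1$ is equality for all time; exactly that upgrade is unavailable here.) Consequently the difference equation for $w_0$ carries additional sources---terms $(g_1^{(m)}-g_2^{(m)})(x,t)\times(\cdots)$ and differences of products of the two systems' lower-order linearizations---that do not vanish identically in $Q$. The argument is repairable because you only use the equation at $t=t_n$: differentiating the measurement equality in $\epsilon$ gives $u_{1,0}^{(m)}(\cdot,t_n)=u_{2,0}^{(m)}(\cdot,t_n)$ for every $m$, so the product-difference terms vanish \emph{at that instant}, while the $(g_1^{(m)}-g_2^{(m)})$ terms vanish there by induction; only then does the single source $\Delta\bigl[\phi_\ell(\cdot,t_n)\prod_k v_0^{(k)}(\cdot,t_n)\bigr]$ survive. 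This node-wise bookkeeping (coincidence of linearizations from the \emph{measurements}, coincidence of coefficients from the \emph{induction}) must replace your global-coincidence claim.

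A second, related gap: admissible conditions 2--3 alone do not ``suppress the coupling to the ordered parameters'' beyond first order in your multi-parameter setup. Since you allow general data $\overrightarrow{\psi}^{(k)}$, the first linearizations of $u_1,u_2,u_3$ need not vanish; then cross terms such as $c_{1100}\,u_{j,0}^{(1)}u_{j,1}^{(1)}$ enter the $u_0$-equation at order $2$, and at orders $\ge 3$ the ordered-parameter linearizations of order $\ge 2$---which \emph{do} depend on $g_j$ through their sources and are \emph{not} observed by $\overrightarrow{M}_m$---contaminate the identity at $t_n$ in a way the measurements cannot cancel. The paper avoids this by taking zero initial data for the ordered parameters, which forces every one of their linearizations to vanish identically (induction using $b_{i,l_0000}\equiv 0$); you need to build that choice into your family $\overrightarrow{\psi}^{(k)}$ rather than attribute the decoupling to conditions 2--3.
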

\begin{remark}
Our result can be extended to the case when $c_1$ and $c_2$ are positive functions on $Q$. We omit the details for conciseness.
\end{remark}

\section{Proof of local well-posedness on (P1)}
In this section, we establish the local well-posedness of  system (\ref{equ:nonlinear system}) by utilizing the implicit function theorem in Banach spaces \cite{E12}. 

\begin{lemma}[Implicit Function Theorem for Banach Spaces]
\label{lemma:implicit function}
Let $X$, $Y$, and $Z$ be Banach spaces and suppose the mapping $f:X\times Y\to Z$ is continuously Fréchet differentiable. If $(x_0,y_0)\in X\times Y$ satisfies $f(x_0,y_0)=0$ and the map $y\mapsto D_yf(x_0,y_0)(0,y)$ is a Banach space isomorphism from $Y$ to $Z$, then there exist neighborhoods $U$ of $x_0$ and $V$ of $y_0$, and a Fréchet differentiable function $g:U\to V$ such that $f(x,g(x))=0$ if and only if $y=g(x)$, for all $(x,y)\in U\times V$.
\end{lemma}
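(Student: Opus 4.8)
The statement is the classical implicit function theorem in Banach spaces, and my plan is to derive it from the Banach contraction mapping principle rather than merely cite \cite{E12}. Write $A := D_y f(x_0,y_0)\colon Y\to Z$ for the partial Fréchet derivative in the second variable (that is, $Av = Df(x_0,y_0)(0,v)$), which by hypothesis is a bounded linear isomorphism, so that $A^{-1}\colon Z\to Y$ is bounded. The crucial observation is that, for fixed $x$, the equation $f(x,y)=0$ is equivalent to $y$ being a fixed point of the map $\Phi(x,\cdot)\colon Y\to Y$ defined by $\Phi(x,y):=y-A^{-1}f(x,y)$. I would therefore reduce the entire statement to producing, for every $x$ near $x_0$, a unique fixed point $g(x)$ of $\Phi(x,\cdot)$ in a small closed ball about $y_0$.

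First I would show that $\Phi(x,\cdot)$ is a contraction on a suitable ball, uniformly in $x$. Because $f$ is continuously Fréchet differentiable, the partial derivative $D_y\Phi(x,y)=I-A^{-1}D_yf(x,y)$ is continuous in $(x,y)$ and vanishes at $(x_0,y_0)$, since $D_yf(x_0,y_0)=A$. Hence there are radii $\rho,r>0$ for which $\|D_y\Phi(x,y)\|\le \tfrac12$ whenever $\|x-x_0\|_X\le\rho$ and $\|y-y_0\|_Y\le r$; the mean value inequality then makes $\Phi(x,\cdot)$ a $\tfrac12$-contraction on $\overline{B}(y_0,r)$. Next I would secure the invariance $\Phi(x,\overline{B}(y_0,r))\subset\overline{B}(y_0,r)$: shrinking $\rho$ and using $f(x_0,y_0)=0$ together with the continuity of $x\mapsto f(x,y_0)$, I arrange $\|\Phi(x,y_0)-y_0\|_Y=\|A^{-1}f(x,y_0)\|_Y\le r/2$, and combine this with the contraction bound $\|\Phi(x,y)-\Phi(x,y_0)\|_Y\le r/2$ through the triangle inequality. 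Since a closed ball in a Banach space is a complete metric space, the contraction mapping principle furnishes a unique $g(x)\in\overline{B}(y_0,r)$ with $\Phi(x,g(x))=g(x)$, equivalently $f(x,g(x))=0$. This yields existence and the local equivalence ``$f(x,y)=0\Leftrightarrow y=g(x)$'' on the neighborhoods $U=B(x_0,\rho)$ and $V=B(y_0,r)$.

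Continuity of $g$ would follow from the uniform contraction: writing $g(x)-g(x')=\Phi(x,g(x))-\Phi(x',g(x'))$ and splitting into a $y$-increment and an $x$-increment, the first is bounded by $\tfrac12\|g(x)-g(x')\|_Y$ and the second by $\|A^{-1}\|$ times the modulus of continuity of $f$ in $x$, so that $\|g(x)-g(x')\|_Y\le 2\|A^{-1}\|\,\|f(x',g(x'))-f(x,g(x'))\|_Z\to 0$. For Fréchet differentiability, I would first note that the invertible bounded operators form an open set on which inversion is continuous, so after shrinking the neighborhoods $D_yf(x,g(x))$ remains invertible; the candidate derivative is then $Dg(x)=-[D_yf(x,g(x))]^{-1}D_xf(x,g(x))$. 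I would confirm it by expanding $0=f(x+h,g(x+h))-f(x,g(x))$ with the first-order Taylor formula for the $C^1$ map $f$, solving for $g(x+h)-g(x)$, and showing the remainder is $o(\|h\|_X)$ using the Lipschitz continuity of $g$ already established.

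I expect the differentiability step to be the main obstacle, since the existence and uniqueness of $g$ is a fairly routine fixed-point argument; the delicate points are controlling the Taylor remainder uniformly as $h\to 0$ and exploiting the openness of the invertible operators to keep $[D_yf(x,g(x))]^{-1}$ bounded on a full neighborhood of $(x_0,y_0)$.
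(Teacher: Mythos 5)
Your argument is correct, but there is essentially nothing in the paper to compare it against: the paper does not prove this lemma at all, it quotes it as the classical Banach-space implicit function theorem with a citation to \cite{E12} and then uses it as a black box in the proof of Theorem \ref{theo:LWP}. Your contraction-mapping proof is the standard textbook argument (essentially the one found in such references): reduce $f(x,y)=0$ to the fixed-point equation for $\Phi(x,y)=y-A^{-1}f(x,y)$, obtain a uniform $\tfrac12$-contraction and invariance of a ball from continuity of $D_yf$ at $(x_0,y_0)$, and upgrade the resulting solution map to a differentiable one via the formula $Dg(x)=-\left[D_yf(x,g(x))\right]^{-1}D_xf(x,g(x))$ together with openness of the set of invertible operators in the space of bounded linear maps. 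So your proposal makes the statement self-contained rather than diverging from the paper. Two pieces of bookkeeping are worth tightening. First, your invariance bound $\|A^{-1}f(x,y_0)\|_Y\le r/2$ combined with the $\tfrac12$-contraction only yields $\|g(x)-y_0\|_Y\le r$, i.e.\ $g(x)$ lies in the closed ball; either strengthen the bound to $r/4$, which gives $\|g(x)-y_0\|_Y\le r/2<r$ so that $g$ maps into the open ball, or take $V=\overline{B}(y_0,r)$ (still a neighborhood of $y_0$), in which case uniqueness of the fixed point in the closed ball is exactly the stated equivalence on $U\times V$. Second, in the differentiability step the Taylor remainder is $o\left(\|h\|_X+\|g(x+h)-g(x)\|_Y\right)$, so you genuinely need $g$ locally Lipschitz, not merely continuous; your displayed continuity estimate does deliver this once combined with the local Lipschitz continuity in $x$ of the $C^1$ map $f$, but that deduction should be made explicit rather than asserted.
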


We also require the following lemma to prove the analyticity of a map, as described in \cite{P87}.

\begin{lemma}
\label{lemma:analytic}
Let $f: U \to F$ be a mapping, $U$ be a subset of a complex Banach space $E$, and $F$ be a complex Banach space. The following two conditions are equivalent:
\begin{enumerate}
\item $f$ is analytic on $U$.
\item $f$ is locally bounded and weakly analytic on $U$, where weakly analytic means that for any $x_0, x \in U$, the mapping
\[
\lambda \mapsto f(x_0 + \lambda x)
\]
is analytic in a neighborhood of the origin in the complex plane.
\end{enumerate}
\end{lemma}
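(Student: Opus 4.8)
The plan is to prove the two implications separately; the implication $(1)\Rightarrow(2)$ is formal, while $(2)\Rightarrow(1)$ carries all the content. Throughout I take $U$ to be open (or pass to its interior), so that the local power-series expansions below are meaningful. For $(1)\Rightarrow(2)$: if $f$ is analytic, then it is continuous and hence locally bounded; moreover, for fixed $x_0$ and $x$ the affine map $\lambda\mapsto x_0+\lambda x$ is an entire $E$-valued function of one complex variable, and the composition of a Fréchet-analytic map with it is an $F$-valued holomorphic function of $\lambda$ on the open set where $x_0+\lambda x\in U$, which contains a neighbourhood of the origin. This is exactly weak analyticity.

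For $(2)\Rightarrow(1)$ I will produce, at each point, the local power series of $f$ directly. Fix $a\in U$ and choose $r>0$ and $M>0$ with $\|f(y)\|\le M$ for $\|y-a\|\le r$. For $h\in E$ with $\|h\|<r$, weak analyticity makes $\lambda\mapsto f(a+\lambda h)$ holomorphic on a neighbourhood of the closed unit disk, so the one-variable Cauchy formula lets me define
\[
P_n(h)=\frac{1}{2\pi i}\int_{|\zeta|=1}\frac{f(a+\zeta h)}{\zeta^{n+1}}\,\mathrm{d}\zeta .
\]
These are the Taylor coefficients of $\lambda\mapsto f(a+\lambda h)$ at $\lambda=0$, so $f(a+h)=\sum_{n\ge0}P_n(h)$; each $P_n$ is $n$-homogeneous, and the bound $\|P_n(h)\|\le M$ for $\|h\|\le r$ together with homogeneity yields $\|P_n(h)\|\le M(\|h\|/r)^n$, whence the series converges absolutely and uniformly on every ball $\{\|h\|\le\rho\}$ with $\rho<r$. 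Since a locally uniformly convergent series of continuous homogeneous polynomials is analytic, it remains only to show that each $P_n$ is a continuous $n$-homogeneous \emph{polynomial}, i.e.\ that it is the restriction to the diagonal of a continuous symmetric $n$-linear map.

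The heart of the argument is constructing that multilinear map. I will set
\[
A_n(h_1,\dots,h_n)=\frac{1}{n!\,(2\pi i)^n}\int_{|\zeta_1|=1}\!\!\cdots\!\int_{|\zeta_n|=1}\frac{f\!\left(a+\textstyle\sum_{j}\zeta_j h_j\right)}{\zeta_1^2\cdots\zeta_n^2}\,\mathrm{d}\zeta_1\cdots\mathrm{d}\zeta_n
\]
for $h_j$ of small norm. To justify this, I note that $(\zeta_1,\dots,\zeta_n)\mapsto f(a+\sum_j\zeta_j h_j)$ is separately holomorphic (again by weak analyticity) and locally bounded (by local boundedness of $f$), hence jointly holomorphic on a polydisk by Osgood's lemma (a separately holomorphic, locally bounded function of several complex variables is jointly holomorphic); the integral then extracts the coefficient of $\zeta_1\cdots\zeta_n$ in its several-variable Taylor series. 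Three facts finish the proof: (i) setting every $h_j=h$ and expanding $f(a+(\sum_j\zeta_j)h)=\sum_m P_m(h)(\sum_j\zeta_j)^m$, only $m=n$ contributes a $\zeta_1\cdots\zeta_n$ term and it does so with multinomial weight $n!$, giving $A_n(h,\dots,h)=P_n(h)$; (ii) symmetry of $A_n$ is immediate from the symmetric form of the integrand; (iii) linearity in each slot follows by a splitting argument, comparing the coefficient of $\zeta_1\cdots\zeta_n$ in $f(a+\zeta_1(y+z)+\sum_{j\ge2}\zeta_j h_j)$ with that of the $(n{+}1)$-variable function $f(a+\zeta_1 y+\zeta_1' z+\sum_{j\ge2}\zeta_j h_j)$, which yields additivity, while rescaling a contour yields homogeneity. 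A Cauchy estimate bounds $A_n$, so it is continuous. Thus $P_n=A_n(\,\cdot\,,\dots,\cdot\,)$ is a continuous $n$-homogeneous polynomial, $f(a+\cdot)=\sum_n P_n$ is a local power series, and $f$ is analytic at $a$; as $a\in U$ was arbitrary, $f$ is analytic on $U$.

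The main obstacle is step (iii) — genuine multilinearity of $A_n$, equivalently the fact that $P_n$ is a true polynomial rather than merely a continuous homogeneous map. In infinite dimensions homogeneity plus continuity does not force polynomiality, so the argument is forced to pass through joint holomorphy in several complex variables; securing that joint holomorphy from separate holomorphy and local boundedness (the Osgood-type input) is the one nontrivial ingredient, and the coefficient bookkeeping in (i) and (iii) must be done carefully to avoid circularly assuming what is being proved.
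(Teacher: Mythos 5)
The paper does not actually prove this lemma: it is quoted as a known result and attributed to \cite{P87} (P\"oschel's \emph{Inverse Spectral Theory}, Appendix A), where it serves purely as a tool in the proof of Theorem \ref{theo:LWP}. So there is no in-paper argument to compare against, and your from-scratch proof is necessarily a different route; what you have written is essentially the classical proof of this theorem in infinite-dimensional holomorphy, and it is correct. The implication $(1)\Rightarrow(2)$ is handled properly, and for $(2)\Rightarrow(1)$ you identify exactly the right ingredients: re-basing weak analyticity at interior points of the disk to get holomorphy of $\lambda\mapsto f(a+\lambda h)$ past $|\lambda|=1$, the Cauchy-integral definition of the homogeneous components $P_n$, Cauchy estimates giving geometric decay and hence local uniform convergence of $\sum_n P_n$, and --- the genuinely nontrivial point, which you correctly isolate --- the passage from separate to joint holomorphy in $(\zeta_1,\dots,\zeta_n)$ via Osgood's lemma, which is what makes the polarization integral $A_n$ multilinear and shows $P_n$ is a true continuous polynomial. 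The coefficient bookkeeping in your steps (i) and (iii) checks out (the multinomial weight $n!$, and additivity via comparing the $(n{+}1)$-variable expansion restricted to $\zeta_1'=\zeta_1$). Two small points worth noting, both implicit corrections to the paper's loose statement rather than defects of your argument: the lemma as stated lets $U$ be an arbitrary subset and restricts the directions $x$ to lie in $U$, whereas the intended (and in \cite{P87}, actual) hypotheses are that $U$ is open and the directions range over the ambient space $E$; you silently adopt the correct reading, which is what your proof requires. What your approach buys is self-containedness; what the paper's citation buys is brevity, plus the slightly stronger form in \cite{P87} where weak analyticity is only demanded after composing with continuous linear functionals (reducing to your vector-valued version via Dunford's theorem).
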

We first recall the global well-posedness of linear parabolic equations in \cite{L10} and then prove the global well-posedness of the linear Cahn-Hillard equation under the PBC.

\begin{lemma}[Global Well-posedness of Linear Parabolic Equations]
\label{lemma:WP of Linear parabolic equation}
Consider the initial-boundary value problem for a parabolic equation given by
\begin{equation*}
\left\{
\begin{aligned}
&u_{t}+\mathcal{L}u=p(x,t), &&\text{in}\ Q,\\
&u(x,0)=\varphi(x), &&\text{on}\ \Omega,\\
&u\ \text{satisfies PBC}, &&\text{on}\ \Sigma,
\end{aligned}
\right.
\end{equation*}
where $\mathcal{L}$ denotes a second order partial differential operator defined as
\begin{align*}
\mathcal{L}u := -\sum\limits_{i,j=1}^na_{ij}(x)u_{x_ix_j}+\sum\limits_{i=1}^nb_i(x,t)u_{x_i}+c(x,t)u.
\end{align*}
The coefficient $a_{ij}(x)$ is uniformly elliptic and symmetric, and $a_{ij}$, $b_i$, $c \in C_p^{\infty}(Q)$. The source term and initial value satisfy $\varphi \in H_p^{q+1}(\Omega),\frac{\mathrm{d}^kp}{\mathrm{d} t^k}\in \mathcal{L}^{q-2k}:=L^2(0,T;H_p^{q-2k}(\Omega))$ for $k=0,\dots,q/2,$ with an even integer $q>0$, and the compatibility conditions hold:
 \begin{equation*}
 \varphi_0:=\varphi \in H_p^1(\Omega), \varphi_1:=p(0)-\mathcal{L}\varphi_0\in H_p^1(\Omega),\cdots,\varphi_q:=\frac{d^{q-1}p}{dt^{q-1}}(0)-\mathcal{L}\varphi_{q-1}\in H_p^1(\Omega).
 \end{equation*}
Then there exists a unique solution $u$ satisfying $\frac{d^k u}{dt^k}\in \mathcal{L}^{q+2-2k}$ for $k=0,\cdots,q/2$.
\end{lemma}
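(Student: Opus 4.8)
The plan is to treat this as a classical parabolic regularity statement, adapted to the periodic setting, and to obtain the claimed space--time regularity through an induction in the number of time derivatives. The structure is favorable: the principal coefficients $a_{ij}(x)$ are time-independent, uniformly elliptic and symmetric, while the lower-order coefficients $b_i,c$ are smooth, so $\mathcal{L}$ is amenable to the Galerkin--energy method. First I would construct a weak solution by Galerkin approximation using the periodic (Fourier) eigenbasis on the torus $\Omega=[-1,1]^d$: projecting onto the first $m$ modes yields a finite-dimensional ODE system whose solution $u_m$ satisfies, after testing against $u_m$ and invoking the G\aa{}rding (coercivity) inequality together with Cauchy--Schwarz, Young, and Gr\"onwall, a uniform bound in $L^\infty(0,T;L^2_p(\Omega))\cap\mathcal{L}^1$. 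Passing to the limit gives a weak solution, and the same energy estimate applied to the difference of two solutions gives uniqueness.

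The core of the argument is a bootstrap in the order of time differentiation. Writing $v_k:=\frac{d^k u}{dt^k}$ and differentiating the equation $k$ times in $t$, the time-independence of $a_{ij}$ ensures that $v_k$ solves a parabolic equation with the \emph{same} principal part,
\begin{equation*}
\partial_t v_k + \mathcal{L}v_k = \frac{d^k p}{dt^k} - \sum_{m=0}^{k-1}\binom{k}{m}\Big(\sum_i (\partial_t^{k-m} b_i)\,\partial_{x_i} v_m + (\partial_t^{k-m} c)\, v_m\Big),
\end{equation*}
whose right-hand side couples $v_k$ only to the lower iterates $v_0,\dots,v_{k-1}$. Proceeding inductively on $k$, I would estimate $v_k$ in $L^\infty(0,T;L^2_p)\cap\mathcal{L}^1$ by the energy method, the forcing being controlled by $\frac{d^k p}{dt^k}\in\mathcal{L}^{q-2k}$, by the induction hypothesis applied to $v_0,\dots,v_{k-1}$, and by the smoothness of the coefficients.

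To upgrade these estimates to the claimed spatial regularity I would exploit the parabolic exchange of one time derivative for two spatial derivatives. For each fixed $t$, viewing the differentiated equation as the elliptic problem $\mathcal{L}v_k = \frac{d^k p}{dt^k} - \partial_t v_k - (\text{lower-order terms})$ and applying periodic elliptic regularity (transparent on the torus via the Fourier characterization of $H_p^s$) gains two spatial derivatives per application. Bootstrapping across the levels $k=0,\dots,q/2$ then yields exactly $v_k\in\mathcal{L}^{q+2-2k}$, the spatial budget dropping by two for each additional time derivative. The compatibility conditions are essential: the recursively defined data $\varphi_k=\frac{d^{k-1}p}{dt^{k-1}}(0)-\mathcal{L}\varphi_{k-1}\in H_p^1(\Omega)$ furnish consistent initial values $v_k(\cdot,0)=\varphi_k$ and are precisely what makes the energy estimates close uniformly up to $t=0$.

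I expect the main obstacle to be the careful bookkeeping in the induction: tracking how the time-dependent coefficients $b_i,c$ generate the commutator forcing under repeated differentiation, and ensuring that at each level the regularity of that forcing (and of the datum $\varphi_k$) is exactly matched to the spatial gain afforded by elliptic regularity, so that the budget $q+2-2k$ is attained rather than eroded. The periodic boundary conditions simplify matters, since integration by parts produces no boundary contributions and the elliptic estimates are immediate in Fourier variables; consequently the result coincides with the standard parabolic theory underlying \cite{L10}.
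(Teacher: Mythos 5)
Your proposal is correct and is essentially the same argument the paper relies on: the paper does not prove this lemma itself but recalls it from \cite{L10}, and the proof in that reference is precisely your scheme — Galerkin construction with energy estimates, induction on the number of time derivatives (using that the principal coefficients $a_{ij}$ are time-independent so each $v_k$ solves a parabolic problem with the same principal part and lower-order commutator forcing), elliptic regularity to trade each time derivative for two spatial derivatives, and the compatibility conditions to give the data $v_k(\cdot,0)=\varphi_k$ needed to close the estimates at $t=0$. Your only deviation is carrying the argument over to periodic boundary conditions explicitly, which, as you observe, simplifies rather than complicates the proof (no boundary terms in the integrations by parts, and elliptic regularity is immediate on the Fourier side).
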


To prove the well-posedness of the linear Cahn-Hilliard equation, we equivalently reformulate it as a coupling of two second order equations, and show the following theorem.

\begin{lemma}[Global Well-posedness of the Linear Cahn-Hilliard Equation]
		\label{lemma: WP of Linear CH equation}
		Consider the initial-boundary value problem for the reformulated linear Cahn-Hilliard equation given by
		\begin{equation}
			\label{equ:linear CH}
			\left\{
			\begin{aligned}
				&u_{t}+a\Delta v-\Delta (b(x,t)u)=p(x,t), &&\text{in}\ Q,\\
				&v = \Delta u,&&\text{in}\ Q,\\
				&u(x,0)=\psi(x),\ v(x,0) = \Delta \psi(x), &&\text{on}\ \Omega,\\
				&u,\ v\ \text{satisfy the PBC}, &&\text{on}\ \Sigma.
			\end{aligned}
			\right.
		\end{equation}
		with coefficients satisfying $a>0$, $b\in L^{\infty}(0,T;H_p^1(\Omega))$. The source term $p$ and the initial value $\psi$ are assumed to be $\psi\in H_p^2(\Omega),p\in L^{2}(0,T;L_p^{\infty}(\Omega))$. Then there exists a unique solution $\begin{pmatrix}u\\v\end{pmatrix}$ to \eqref{equ:linear CH} satisfying		
		\begin{align*}
			\partial_t u\in L^2(0,T;H_p^{-1}(\Omega)),\ u\in L^{\infty}(0,T;H_p^1(\Omega)),\ 
			v\in L^2(0,T;H_p^1(\Omega)),
		\end{align*}
where $H_p^{-1}(\Omega)$ is the dual space of $H_p^1(\Omega)$, and the energy estimate holds
		\begin{align*}
			\|\partial_t u\|_{L^2(0,T;H_p^{-1}(\Omega))} +\|u\|_{L^{\infty}(0,T;H_p^1(\Omega))}+\|v\|_{L^2(0,T;H_p^1(\Omega))}\leq C,
		\end{align*}
		where $C$ is determined by $\Omega,T,a,\|b\|_{L^{\infty}(0,T;H_p^1(\Omega))},\|\psi\|_{H_p^2(\Omega)}$ and $\|p\|_{L^{2}(0,T;L_p^{\infty}(\Omega))}$.
\end{lemma}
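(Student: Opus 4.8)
The plan is to construct the solution by a Galerkin approximation adapted to the periodic setting, to derive energy estimates uniform in the discretization level, and then to pass to the limit; uniqueness will follow at once from linearity. First I would recast \eqref{equ:linear CH} in its mixed weak form: testing the first equation against $\phi\in H_p^1(\Omega)$ and the constraint $v=\Delta u$ against $\chi\in H_p^1(\Omega)$ produces
\begin{align*}
\langle \partial_t u,\phi\rangle - a\int_\Omega \nabla v\cdot\nabla\phi\,\mathrm{d}x + \int_\Omega\nabla(bu)\cdot\nabla\phi\,\mathrm{d}x &= \int_\Omega p\,\phi\,\mathrm{d}x,\\
\int_\Omega v\,\chi\,\mathrm{d}x + \int_\Omega\nabla u\cdot\nabla\chi\,\mathrm{d}x &= 0,
\end{align*}
where $\langle\cdot,\cdot\rangle$ denotes the $H_p^{-1}$--$H_p^1$ pairing. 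Let $\{w_k\}$ be the $L^2_p(\Omega)$-orthonormal eigenfunctions of $-\Delta$ under PBC (the Fourier basis), with $-\Delta w_k=\lambda_k w_k$, and seek $u_m(t)=\sum_{k\le m}\xi_k(t)w_k$ together with $v_m=\Delta u_m$. Since the $w_k$ simultaneously diagonalize $-\Delta$, substitution into the weak form reduces the problem to a linear first-order ODE system $\xi_j'+a\lambda_j^2\xi_j+\sum_k B_{jk}(t)\xi_k=F_j(t)$ with coefficients integrable in $t$, and Carath\'eodory's theorem yields a unique absolutely continuous $u_m$ on $[0,T]$.

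The heart of the argument is a pair of a priori estimates uniform in $m$. Testing the approximate first equation with $u_m$ gives, after integration by parts,
\[
\tfrac12\tfrac{d}{dt}\|u_m\|_{L^2}^2+a\|\Delta u_m\|_{L^2}^2=-\int_\Omega u_m\,\nabla b\cdot\nabla u_m\,\mathrm{d}x-\int_\Omega b\,|\nabla u_m|^2\,\mathrm{d}x+\int_\Omega p\,u_m\,\mathrm{d}x,
\]
from which I would obtain control of $u_m$ in $L^\infty(0,T;L^2_p)\cap L^2(0,T;H_p^2)$, i.e.\ of $v_m$ in $L^2(0,T;L^2_p)$. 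Testing next with $-\Delta u_m=-v_m$ gives
\[
\tfrac12\tfrac{d}{dt}\|\nabla u_m\|_{L^2}^2+a\|\nabla v_m\|_{L^2}^2=\int_\Omega\nabla(bu_m)\cdot\nabla v_m\,\mathrm{d}x-\int_\Omega p\,v_m\,\mathrm{d}x,
\]
which upgrades the control to $u_m\in L^\infty(0,T;H_p^1)$ and, combined with the first estimate, to $v_m\in L^2(0,T;H_p^1)$.

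The main obstacle is precisely the bound on the variable-coefficient term $\int_\Omega\nabla(bu_m)\cdot\nabla v_m\,\mathrm{d}x=\int_\Omega(u_m\nabla b+b\nabla u_m)\cdot\nabla v_m\,\mathrm{d}x$: because $b$ lies only in $L^\infty(0,T;H_p^1)$ and $H^1$ fails to embed into $L^\infty$ when $d\ge2$, one cannot simply factor out $\|b\|_{L^\infty}$. Instead I would exploit the embedding $H_p^1\hookrightarrow L^6$ (valid for $d\le3$), Gagliardo--Nirenberg interpolation to redistribute derivatives (for instance $\|\nabla u_m\|_{L^3}\lesssim\|\nabla u_m\|_{L^2}^{1/2}\|u_m\|_{H^2}^{1/2}$), and Young's inequality to absorb a small multiple of $\|\nabla v_m\|_{L^2}^2$ into the dissipative left-hand side, the remaining lower-order terms being controlled by the first estimate. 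One must also track the kernel of $-\Delta$, since the mean of $u_m$ is conserved up to $\int_\Omega p$, so Poincar\'e's inequality is applied on the zero-mean component. Gr\"onwall's inequality then closes both estimates with a constant depending only on $\Omega,T,a,\|b\|_{L^\infty(0,T;H_p^1)},\|\psi\|_{H_p^2}$ and $\|p\|_{L^{2}(0,T;L^\infty_p)}$.

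Finally, reading the first equation as $\partial_t u_m=-a\Delta v_m+\Delta(bu_m)+p$ and pairing against $H_p^1$ test functions bounds $\partial_t u_m$ in $L^2(0,T;H_p^{-1})$. With these uniform bounds I would extract weakly and weakly-$*$ convergent subsequences, and the Aubin--Lions--Simon lemma provides strong convergence of $u_m$ in $L^2(0,T;L^2_p)$, which is exactly what is needed to pass to the limit in the product $bu_m$ and to identify the limit $(u,v)$ as a weak solution, the initial datum being recovered from the convergence of $u_m(\cdot,0)$. Uniqueness is immediate from linearity: the difference of two solutions solves \eqref{equ:linear CH} with $p\equiv0$ and $\psi\equiv0$, and the same energy estimate forces it to vanish; the estimate itself then yields the claimed dependence of $C$.
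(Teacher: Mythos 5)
Your proposal is correct, and its skeleton---Galerkin approximation, a priori energy estimates, Gr\"onwall, weak compactness, uniqueness by linearity---is the same as the paper's. The genuine difference lies at the technical heart of the argument, the term $\int_\Omega \nabla(bu_m)\cdot\nabla(\cdot)\,\mathrm{d}x$. The paper expands $\nabla(bu_m)=u_m\nabla b+b\nabla u_m$ and directly writes $\|b\nabla u_m\|_{L_p^2(\Omega)}^2+\|u_m\nabla b\|_{L_p^2(\Omega)}^2\le C_1\left(\|\nabla u_m\|_{L_p^2(\Omega)}^2+\|u_m\|_{L_p^2(\Omega)}^2\right)$ with $C_1$ depending only on $\|b\|_{L^\infty(0,T;H_p^1(\Omega))}$ --- a factorization that is immediate only when $b$ and $\nabla b$ admit pointwise bounds (e.g.\ $d=1$, where $H_p^1\hookrightarrow L^\infty$), and which the paper does not justify for $d=2,3$. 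You identify precisely this obstruction and instead close the estimate via $H_p^1\hookrightarrow L^6$, Gagliardo--Nirenberg interpolation, and Young's inequality, absorbing a small multiple of the dissipation into the left-hand side; this costs more bookkeeping but is the version of the estimate that actually survives for $d\le 3$ with $b$ merely in $L^\infty(0,T;H_p^1(\Omega))$. The remaining differences are minor: you use the Fourier eigenbasis of $-\Delta$ with $v_m=\Delta u_m$ enforced exactly, while the paper expands $u_m$ and $v_m$ separately in an $H_p^1$-orthogonal basis coupled through the discrete constraint (equivalent when the basis diagonalizes $\Delta$); you run two sequential energy estimates (test functions $u_m$, then $-v_m$) where the paper combines the pairings against $u_m$, $v_m$, and $\partial_t u_m$ into a single identity with dissipation $\|v_m\|^2+\|\nabla v_m\|^2$; and you invoke Aubin--Lions for strong convergence, which is a safe but unnecessary extra here, since for this linear problem weak convergence (as used in the paper) already suffices to pass to the limit in $bu_m$ against fixed test functions.
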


\begin{remark}
For convenience in subsequent discussions, we  assume the constant $a = 1$, otherwise we can replace the variable $x$ by $x^* = x/\sqrt[4]
{a}$. 
\end{remark}

\begin{remark}
	\label{def:weak solution}
 The weak solution of equation (\ref{equ:linear CH}) is given as follows
	\begin{align}
		\label{equ:weak equation}
		\begin{cases}
			\langle \partial_tu,w\rangle=(\nabla v,\nabla w)-(\nabla (b(x,t)u),\nabla w)+(p,w),\\
			(v,w)+(\nabla u,\nabla w)=0,
		\end{cases}
	\end{align}
	for each $w\in H_p^1(\Omega)$ and almost every time $t\in [0,T]$, where $(\cdot,\cdot)$ denotes the inner product in $L_p^2(\Omega)$ and $\langle\cdot,\cdot\rangle$ denotes the inner product in the dual space  of $H_p^1(\Omega)$ and $H_p^{-1}(\Omega)$.
\end{remark}

To prove Lemma \ref{lemma: WP of Linear CH equation}, we utilize the Galerkin method \cite{L10} to construct the weak solution of equation \eqref{equ:linear CH} and establish its existence, uniqueness, and stability through the energy estimate.

\begin{proof}[Proof of Lemma \ref{lemma: WP of Linear CH equation}:] The proof is divided into three steps.

\textbf{Step 1:} 
	Let $\{\omega_k\}_{k=1}^{\infty}$ be a set of smooth functions that form an orthogonal basis in $H_p^1(\Omega)$  with $\omega_1=1$.  For instance, in case when $\Omega=[-1,1]$, we can choose the Fourier basis $\{\omega_k\}_{k=1}^{\infty} =\{\{\cos(2k\pi x)\}_{k=0}^{\infty},\{\sin(2k\pi x)\}_{k=1}^{\infty}\}$.

Consider functions $u_m,v_m:[0,T]\to H_p^1(\Omega)$ defined as follows
	\begin{equation}\label{uvexpan}
		u_m(t)=\sum_{k=1}^m \alpha_{m,k}(t)\omega_k,\quad v_m(t)=\sum_{k=1}^m \beta_{m,k}(t)\omega_k.
	\end{equation}
	These functions satisfy the equations

	\begin{align}
		\label{equ:weak form of um}
		\begin{cases}
		(\partial_tu_m,\omega_k)=(\nabla v_m,\nabla \omega_k)-(\nabla \left(b(x,t)u_m\right),\nabla \omega_k)+(p,\omega_k),\\
		(v_m,\omega_k)+(\nabla u_m,\nabla \omega_k)=0,
		\end{cases}
	\end{align}
for all $\omega_k$. Here, the coefficients $\alpha_{m,k}(t)$ and $\beta_{m,k}(t)$ are determined by the ordinary differential equations and initial values with
 $u_m(0) = \mathcal{P}_m(\psi(x))$ and $v_m(0) = \mathcal{P}_m(\Delta \psi(x))$, where $\mathcal{P}_m$ is the projection operator onto the space $W_m = \mbox{span}(\omega_1,\cdots,\omega_m)$. The Cauchy–Peano theorem \cite{PGG90} and the smoothness of $b(x,t)$ allow us to deduce the existence of $\alpha_{m,k}(t)$ and $\beta_{m,k}(t)$.

\textbf{Step 2:} In (\ref{equ:weak form of um}), we take  $v_m$ and $\partial_t u_m$ as test functions, which gives
	\begin{align*}
		\begin{cases}
	(\partial_tu_m,v_m)=\|\nabla v_m\|^2_{L_p^2(\Omega)}-(\nabla \left(b(x,t)u_m\right),\nabla v_m)+(p,v_m),\\
		(v_m,\partial_t u_m)+(\nabla u_m,\partial_t\nabla  u_m)=0.
		\end{cases}
	\end{align*}

Similarly, using $u_m$ and $v_m$ as test functions, we have
\begin{align*}
\begin{cases}
       \frac{1}{2}\frac{d}{dt}\|u_m\|^2_{L_p^2(\Omega)}=(\nabla v_m,\nabla u_m)-(\nabla \left(b(x,t)u_m\right),\nabla u_m)+(p,u_m),\\
		\|v_m\|^2_{L_p^2(\Omega)}+(\nabla u_m,\nabla v_m)=0. 
\end{cases}
\end{align*} 
Combining them yields
	\begin{align*}
		&\frac{1}{2}\frac{d}{dt}\left(\|u_m\|^2_{L_p^2(\Omega)}+\|\nabla u_m\|^2_{L^2_p(\Omega)}\right)+\|v_m\|^2_{L_p^2(\Omega)}+\|\nabla v_m\|^2_{L_p^2(\Omega)}\\
  =&\int_{\Omega}\nabla \left(b(x,t)u_m\right)\cdot \nabla (v_m-u_m)+p(u_m-v_m)\mathrm{d}x.
	\end{align*}
By applying the Cauchy inequality, it holds
\begin{equation}
\label{equ: estimate RHS1}
     \begin{aligned}
     &\int_{\Omega}\nabla \left(b(x,t)u_m\right)\cdot \nabla (v_m-u_m)\mathrm{d}x\\
     \leq & \varepsilon_1\|\nabla \left(b(x,t)u_m\right)\|_{L_p^2(\Omega)}^2+\frac{1}{4\varepsilon_1} \|\nabla (v_m-u_m)\|_{L_p^2(\Omega)}^2\\
     \leq &2\varepsilon_1\left(\|b(x,t)\nabla u_m\|^2_{L_p^2(\Omega)}+\|u_m\nabla \left( b(x,t)\right) \|_{L_p^2(\Omega)}^2\right)+\frac{1}{2\varepsilon_1} \left(\|\nabla v_m\|_{L_p^2(\Omega)}^2+\|\nabla u_m\|_{L_p^2(\Omega)}^2\right)\\
     \leq & 2C_1\varepsilon_1\left(\|\nabla u_m\|_{L_p^2(\Omega)}^2+\| u_m\|_{L_p^2(\Omega)}^2\right)+\frac{1}{2\varepsilon_1} \left(\|\nabla v_m\|_{L_p^2(\Omega)}^2+\|\nabla u_m\|_{L_p^2(\Omega)}^2\right),
 \end{aligned}
\end{equation}
 where $C_1$ is determined by $\|b(x,t)\|_{L^{\infty}(0,T;H_p^1(\Omega))}$, and 
 \begin{equation}
 \label{equ: estimate RHS2}
      \begin{aligned}
     &\int_{\Omega} p(u_m-v_m) \mathrm{d}x\\
     \leq & \varepsilon_2 \| p\|_{L_p^2(\Omega)}^2+\frac{1}{4\varepsilon_2}\|u_m-v_m\|_{L_p^2(\Omega)}^2\\
     \leq &\varepsilon_2 \| p\|_{L_p^2(\Omega)}^2+\frac{1}{2\varepsilon_2}\left(\|u_m\|_{L_p^2(\Omega)}^2+\|v_m\|_{L_p^2(\Omega)}^2\right).
 \end{aligned}
 \end{equation}
 By selecting $\varepsilon_1 =\varepsilon_2 = 2$ in \eqref{equ: estimate RHS1} and \eqref{equ: estimate RHS2}, it yields 
 \begin{equation}
 \label{equ: estimate RHS3}
      \begin{aligned}
     &\frac{d}{dt}\left(\|u_m\|^2_{L_p^2(\Omega)}+\|\nabla u_m\|^2_{L^2_p(\Omega)}\right)+\|v_m\|^2_{L_p^2(\Omega)}+\|\nabla v_m\|^2_{L_p^2(\Omega)}\\\leq & C_2\left(\|u_m\|^2_{L_p^2(\Omega)}+\|\nabla u_m\|^2_{L^2_p(\Omega)} \right)+C_3,
 \end{aligned}
 \end{equation}
where $C_2$ and $C_3$ depend on $\|b(x,t)\|_{L^{\infty}(0,T;H_p^1(\Omega))}$ and $\|p(x,t)\|_{L^{\infty}(0,T;L_p^2(\Omega))}$.  Integrating both sides of \eqref{equ: estimate RHS3}  over $[0,s]$, we get
	\begin{align*}
		&\|\nabla u_m(s)\|^2_{L^2_p(\Omega)}+\| u_m(s)\|^2_{L^2_p(\Omega)}+\|v_m\|^2_{L^2(0,s;L_p^2(\Omega))}+\|\nabla v_m\|^2_{L^2(0,s;L_p^2(\Omega))}\\\
  \leq &C_2\int_0^s \left(\|u_m\|^2_{L_p^2(\Omega)}+\|\nabla u_m\|^2_{L^2_p(\Omega)}\right)\mathrm{d}t+C_3s.
	\end{align*}
	By the Gronwall inequality, there exists $C_4$ such that:
	\begin{align}
 \label{equ:u_m_bounded}
		\|u_m\|_{L^{\infty}((0,T),H_p^1(\Omega))}+\|v_m\|_{L^2(0,T;H_p^1(\Omega))}\leq C_4.
	\end{align}

Based on the estimate above, by fixing any $w\in H^1_p(\Omega)$ with $\|w\|_{H_p^1(\Omega)}\leq 1$, we can deduce that 
 \begin{align*}
     \langle \partial_t u_m,w \rangle &= (\partial_t u_m,w)=(\partial_t u_m,\mathcal{P}_m(w))\\
 &=(\nabla v_m,\nabla\mathcal{P}_m( w))-(\nabla (\mathcal{P}_m\left(b(x,t)\right)u_m),\nabla \mathcal{P}_m(w))+(\mathcal{P}_m(p),\mathcal{P}_m(w)).
 \end{align*}
 Consequently
 \begin{align*}
     |\langle \partial_t u_m,w \rangle|\leq C_5\left(\|p\|_{L_p^2(\Omega)}+\|u_m\|_{H_p^1(\Omega)}+\|v_m\|_{H_p^1(\Omega)}\right),
 \end{align*}
 since $\|\mathcal{P}_m(w)\|_{H^1(\Omega)}\leq \|w\|_{H^1(\Omega)}\leq 1$. Thus
 \begin{align*}
     \|\partial_t u_m\|_{H_p^{-1}(\Omega)}\leq C_5\left(\|p\|_{L_p^2(\Omega)}+\|u_m\|_{H_p^1(\Omega)}+\|v_m\|_{H_p^1(\Omega)}\right),
 \end{align*}
 and therefore 
\begin{align*}
    \|\partial_t u_m\|_{L^2(0,T;H_p^{-1}(\Omega))}\leq C_6\left(\|p\|_{L^2(0,T;L_p^2(\Omega))}+\|u_m\|_{L^2(0,T;H_p^1(\Omega))}+\|v_m\|_{L^2(0,T;H_p^1(\Omega))}\right).
\end{align*}

\textbf{Step 3:} According to the energy estimates for $u_m$, $v_m$ and $\partial_t u_m$, we have the existence of a subsequence satisfying:
	\begin{align*}
		\begin{cases}
		&u_m\rightharpoonup u\ \text{in}\ L^{\infty}(0,T;H_p^1(\Omega)),\\ &\partial_t u_m\rightharpoonup \partial_t u\ \text{in}\ L^2(0,T;H^{-1}_p(\Omega)),\\
		&v_m\rightharpoonup v\ \text{in}\ L^2(0,T;H_p^1(\Omega)),
		\end{cases}
	\end{align*}
where $u,\partial_t u,v$ satisfy the weak form \eqref{equ:weak equation} and initial value $u(x,0) = \psi(x),v(x,0) = \Delta \psi(x)$. Since the subsequent proof follows the standard technique (e.g., as discussed in \cite{L10} for parabolic equations), the proof is completed.
\end{proof}
When the coefficient $b$ has a higher order regularity, we can establish the existence and uniqueness of the global strong solution to equation (\ref{equ:linear CH}). 
	\begin{lemma}
		\label{lemma:linear CH inprove regularity}
		Under the assumptions of Lemma \ref{lemma: WP of Linear CH equation}, if we further assume \( b \in L^{\infty}(0,T;H^2_p(\Omega)),\) then equation (\ref{equ:linear CH}) has a unique global strong solution \( \begin{pmatrix} u \\ v \end{pmatrix} \) that satisfies the following conditions:
\begin{align*}
  &u \in L^{\infty}(0,T;H_p^2(\Omega)) \cap L^2(0,T;H_p^4(\Omega)),\,\,\, \partial_t u \in L^{\infty}(0,T;L_p^2(\Omega)) \cap L^2(0,T;H_p^2(\Omega)), \\
  &\partial_t^2 u\in L_p^2(0,T;H^{-1}(\Omega)),\,\,\, v \in L^2(0,T;H_p^2(\Omega)).  
\end{align*}

\end{lemma}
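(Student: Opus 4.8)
The plan is to continue the Galerkin construction already set up in the proof of Lemma~\ref{lemma: WP of Linear CH equation} and to close two successive energy estimates, uniform in the truncation index $m$, that sit one and two orders above the basic one. It is convenient to eliminate $v_m = \Delta u_m$ and work with the single fourth-order identity $\partial_t u_m + \Delta^2 u_m = \Delta(b u_m) + p$ tested against members of $W_m$. Two structural facts drive everything: on $\Omega = [-1,1]^d$ with $d \le 3$ the space $H^2_p(\Omega)$ is a Banach algebra that embeds into $C_p^0(\Omega)$, and for periodic functions one has the elliptic equivalence $\|w\|_{H^{2k}_p(\Omega)} \le C(\|w\|_{L^2_p(\Omega)} + \|\Delta^k w\|_{L^2_p(\Omega)})$. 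The freshly added hypothesis $b \in L^\infty(0,T;H^2_p(\Omega))$ enters precisely through the product estimate $\|\Delta(b u_m)\|_{L^2_p(\Omega)} \le C\|b\|_{H^2_p(\Omega)}\|u_m\|_{H^2_p(\Omega)}$, which is the only place where more than the $H^1_p$-regularity of Lemma~\ref{lemma: WP of Linear CH equation} is used.

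First I would test the $m$-th equation with $\partial_t u_m$. The left-hand side produces $\tfrac12\tfrac{d}{dt}\|\Delta u_m\|_{L^2_p(\Omega)}^2 + \|\partial_t u_m\|_{L^2_p(\Omega)}^2$ after integrating the bilaplacian by parts twice, while the right-hand side is bounded, via Young's inequality and the product estimate above, by a small multiple of $\|\partial_t u_m\|_{L^2_p(\Omega)}^2$ plus $C\|b\|_{H^2_p(\Omega)}^2\|u_m\|_{H^2_p(\Omega)}^2 + C\|p\|_{L^2_p(\Omega)}^2$. Absorbing the top-order term, writing $\|u_m\|_{H^2_p}^2 \le C(\|u_m\|_{L^2_p}^2 + \|\Delta u_m\|_{L^2_p}^2)$, and feeding in the $L^\infty(0,T;H^1_p)$ bound from Lemma~\ref{lemma: WP of Linear CH equation} for the lower-order part, Gronwall's inequality yields $\|u_m\|_{L^\infty(0,T;H^2_p)} + \|\partial_t u_m\|_{L^2(0,T;L^2_p)} \le C$; here the initial datum is controlled by $\|\Delta u_m(0)\|_{L^2_p} \le \|\Delta\psi\|_{L^2_p}$ since $\psi \in H^2_p$. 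Rearranging $\Delta^2 u_m = -\partial_t u_m + \Delta(b u_m) + p$ and invoking the elliptic equivalence then bounds $u_m$ in $L^2(0,T;H^4_p)$ and therefore $v_m = \Delta u_m$ in $L^2(0,T;H^2_p)$.

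The second estimate targets the time-derivative bounds. Differentiating the Galerkin system in $t$ and setting $w_m = \partial_t u_m$ gives $\partial_t w_m + \Delta^2 w_m = \Delta(b w_m) + \Delta(\partial_t b\, u_m) + \partial_t p$; testing with $w_m$ produces $\tfrac12\tfrac{d}{dt}\|w_m\|_{L^2_p}^2 + \|\Delta w_m\|_{L^2_p}^2$ and, after integration by parts and the same product manipulations, a right-hand side controlled by $\|b\|_{H^2_p}$ together with the first-step bounds. Gronwall's inequality then delivers $\|w_m\|_{L^\infty(0,T;L^2_p)} + \|w_m\|_{L^2(0,T;H^2_p)} \le C$, while pairing the $w_m$-equation with arbitrary elements of $H^1_p(\Omega)$ and using these bounds identifies $\partial_t^2 u_m$ as an element of $L^2(0,T;H^{-1}_p)$. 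Finally, weak-$\ast$ and weak compactness extract a subsequence converging in each listed topology, weak lower semicontinuity transfers all bounds to the limit $(u,v)$, and uniqueness is immediate from linearity: the difference of two solutions solves \eqref{equ:linear CH} with zero data, and the energy estimate of Lemma~\ref{lemma: WP of Linear CH equation} forces it to vanish.

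I expect the main obstacle to be this second estimate rather than the first. The time-differentiated equation involves $\Delta(\partial_t b\, u_m)$ and $\partial_t p$, so one cannot simply differentiate in $t$ under the stated hypotheses; the robust remedy is to run the argument with difference quotients $D_h^t u_m$ in place of $\partial_t u_m$, derive the bounds uniformly in the step $h$, and pass to the limit $h \to 0$, which keeps the final constant dependent only on $\Omega, T, \|b\|_{L^\infty(0,T;H^2_p)}, \|\psi\|_{H^2_p}$ and the norm of $p$. The accompanying subtlety is the control of the initial layer required for the $L^\infty(0,T;L^2_p)$ bound on $\partial_t u$, which is exactly where compatibility between $\psi$ and the equation is felt. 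By contrast, all the product and embedding manipulations for $\Delta(b u_m)$ are routine once the $d \le 3$ algebra property of $H^2_p$ is invoked, so the heart of the proof is closing the highest-order Gronwall loop uniformly in $m$ (and in $h$).
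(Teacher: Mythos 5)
Your overall strategy coincides with the paper's proof: both continue the Galerkin construction of Lemma~\ref{lemma: WP of Linear CH equation} with a basis adapted to higher regularity, close an $H^2$-level energy estimate, differentiate the Galerkin system in time and test with $\partial_t u_m$ to bound $\partial_t u$ in $L^{\infty}(0,T;L_p^2(\Omega))\cap L^2(0,T;H_p^2(\Omega))$, bound $\partial_t^2 u_m$ by pairing against test functions, and pass to the limit by weak compactness. The only organizational difference is in the first step: you test with $\partial_t u_m$ and recover $u\in L^2(0,T;H_p^4(\Omega))$ afterwards by rearranging the equation and invoking periodic elliptic regularity, whereas the paper tests with $\Delta^2 u_m$ (after substituting $v_m=\Delta u_m$) to obtain $u_m\in L^{\infty}(0,T;H_p^2(\Omega))\cap L^2(0,T;H_p^4(\Omega))$ first and only then tests with $\partial_t u_m$; both orderings close, so this difference is immaterial.

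The one genuine problem is your claimed remedy for the second estimate. You correctly observe that differentiating the Galerkin system in $t$ uses $\partial_t b$ and $\partial_t p$, which the stated hypotheses do not provide; but replacing $\partial_t$ by difference quotients $D_h^t$ does not repair this, because $D_h^t(bu_m)=b(\cdot,t+h)\,D_h^t u_m+(D_h^t b)\,u_m$, and the terms $D_h^t b$ and $D_h^t p$ are bounded uniformly in $h$ only if $b$ and $p$ themselves possess time regularity. In fact, no argument can deliver the asserted $L^{\infty}$-in-time bound on $\partial_t u$ with constants depending only on $\Omega$, $T$, $\|b\|_{L^{\infty}(0,T;H^2_p(\Omega))}$, $\|\psi\|_{H^2_p(\Omega)}$ and the norm of $p$: already for the scalar ODE $u'+u=p$ on $(0,T)$ with $u(0)=0$ and $p\in L^2(0,T)$ one has $u'=p-u\notin L^{\infty}(0,T)$ in general, so $\partial_t u\in L^{\infty}(0,T;L^2_p(\Omega))$ is simply unattainable at that level of data regularity. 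The paper's proof faces the same issue but resolves it differently: it differentiates in time anyway and lets its constants (the $C_{12}$, $C_{13}$, $C_{14}$ appearing there) depend on norms of $\partial_t b$ and $\partial_t p$, i.e., it tacitly assumes the additional time regularity of the data, which is indeed available where the lemma is used (cf.\ Remark~\ref{remark:higher regularity} and Lemma~\ref{lemma:WP of Linear parabolic equation}, where the data carry $q/2$ time derivatives). Your write-up should do the same --- state the extra hypotheses on $\partial_t b$ and $\partial_t p$ and allow the constants to depend on them --- rather than assert that difference quotients remove the need for them.
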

\begin{proof}
		First, we take $\{\omega_k\}_{k=1}^{\infty}$ as an orthogonal basis in $H_p^4(\Omega)$ for the expansions $u_m$ and $v_m$ in equation \eqref{uvexpan}. Then consider the elliptic equation under periodic boundary conditions
		\begin{align}
			\label{equ:ellipse equation}
		 \Delta u_m(\cdot,t)=	v_m(\cdot,t)\ \text{in}\ \Omega,\ \forall t\in [0,T].
		\end{align}
		According to the regularity theory for elliptic equations, it holds $u_m\in L^{2}(0,T;H_p^3(\Omega))$  since $v_m\in L^2(0,T;H_p^1(\Omega))$ by the proof of Lemma \ref{lemma: WP of Linear CH equation}. Further, considering $\Delta^2 u_m$ as a test function in (\ref{equ:weak form of um}), we have
\begin{align}
\label{equ:High regular weak form}
    (\partial_t u_m,\Delta^2 u_m) = -(\Delta v_m,\Delta^2 u_m)+(\Delta (b(x,t)u_m), \Delta^2 u_m)+(p,\Delta^2 u_m).
		\end{align}
		Substituting (\ref{equ:ellipse equation}) into \eqref{equ:High regular weak form} and applying the integration by parts, we get
		\begin{align*}
			\frac{1}{2}\frac{d}{dt}\int_{\Omega} (\Delta u_m)^2\mathrm{d}x+\|\Delta^2 u_m\|^2_{L_p^2(\Omega)}=R_1+R_2,
		\end{align*}
		where $R_1 = (\Delta (b(x,t)u_m), \Delta^2 u_m),R_2 = (p,\Delta^2 u_m)$. By the Cauchy inequality, it yields
		\begin{align*}
			R_1 &\leq \frac{1}{4}\|\Delta^2 u_m\|_{L_p^2(\Omega)}^2+C_7\|\Delta u_m\|_{L_p^2(\Omega)}^2+C_7,\\
			R_2 &\leq \frac{1}{4} \|\Delta^2 u_m\|_{L_p^2(\Omega)}^2+C_7,
		\end{align*}
		where $C_7$ depends on $\|b\|_{L^{\infty}(0,T;H_p^2(\Omega))}$, $\|u_m\|_{L^{\infty}(0,T;H_p^1(\Omega))}$ and $\|p\|_{L^2(0,T;L_p^2(\Omega))}$. Since $\|u_m\|_{L^{\infty}(0,T;H_p^1(\Omega))}$ is bounded by the inequality \eqref{equ:u_m_bounded}, we have
		\begin{align}
			\label{equ:weak form}
			\frac{d}{dt}\int_{\Omega} (\Delta u_m)^2\mathrm{d}x+\|\Delta^2 u_m\|^2_{L_p^2(\Omega)}\leq 2C_8\|\Delta u_m\|_{L_p^2(\Omega)}^2+2C_8.
		\end{align}
		Integrating (\ref{equ:weak form}) over $[0,s]$ yields
		\begin{align*}
			\|\Delta u_m(s)\|_{L_p^2(\Omega)}^2+\|\Delta^2 u_m\|^2_{L^2(0,s;L_p^2(\Omega))}&\leq 2C_8\int_0^s\|\Delta u_m\|_{L_p^2(\Omega)}^2\mathrm{d}t+2C_8s+\|\mathcal{P}_m(\Delta \psi(x))\|_{L_p^2(\Omega)}^2.
		\end{align*} 
  Using the integral form of Gronwall inequality and considering the supremum over $[0,T]$, there exists $C_9>0$, such that
		\begin{align*}
			\|\Delta u_m\|_{L^{\infty}(0,T;L_p^2(\Omega))}+\|\Delta^2 u_m\|_{L^2(0,T;L_p^2(\Omega))}\leq C_9.
		\end{align*}
		By combining the estimate above, there exists a constant $C_{10}$ such that
		\begin{align*}
			\|u_m\|_{L^{\infty}(0,T;H_p^2(\Omega))\cap L^2(0,T;H_p^4(\Omega))}\leq C_{10}.
		\end{align*}
Since \(\Delta v_m = u_m\), it follows that \(v_m \in L_2(0, T; H_p^2(\Omega))\). We now proceed to estimate \(\partial_t u_m\) and \(\partial_t^2 u_m\). By using \(\partial_t u_m\) as a test function in the weak formulation of \(\eqref{equ:weak form of um}\), we obtain
\begin{align*}
    (\partial_t u_m, \partial_t u_m) + (\Delta u_m, \Delta (\partial_t u_m)) = (p, \partial_t u_m) + (\Delta (b(x, t) u_m), \partial_t u_m).
\end{align*}
This leads to the inequality
\begin{align*}
    \|\partial_t u_m\|^2_{L_p^2(\Omega)} + \frac{1}{2} \frac{d}{dt} \|\Delta u_m\|^2_{L_p^2(\Omega)} \leq \|p\|^2_{L_p^2(\Omega)} + C_{11}\|u_m\|^2_{H_p^2(\Omega)} + \frac{1}{2}\|\partial_t u_m\|^2_{L_p^2(\Omega)},
\end{align*}
where the constant \(C_{11}\) depends on \(\|b\|_{L^{\infty}(0,T;H_p^2(\Omega))}\). As a result, we conclude that \(\partial_t u_m \in L^2(0, T; L_p^2(\Omega))\).

Next, by differentiating \(\eqref{equ:weak form of um}\) with respect to time \(t\) and using \(\partial_t u_m\) as the test function, we derive that
\begin{align*}
    \left(\partial_t^2 u_m, \partial_t u_m\right) + (\Delta \partial_t u_m, \Delta \partial_t u_m)  = (\partial_t p, \partial_t u_m)+ \left(u_m\partial_t b + b\partial_t u_m,  \Delta  \partial_t u_m\right).
\end{align*}
It holds the following inequality
\begin{align*}
    &\frac{1}{2} \frac{d}{dt} \|\partial_t u_m\|^2_{L_p^2(\Omega)} + \|\Delta \partial_t u_m\|^2_{L_p^2(\Omega)} \\
    \leq &\frac{1}{2} \left(\|\partial_t p\|_{L_p^2(\Omega)}^2 + \|\partial_t u_m\|_{L_p^2(\Omega)}^2\right) + \frac{1}{2}\|\Delta \partial_t u_m\|_{L_p^2(\Omega)}^2\\&+\frac{1}{2}\left(\|u_m\|_{L_p^2(\Omega)}^4+ \|\partial_t u_m\|_{L_p^2(\Omega)}^4+\|b\|_{L_p^2(\Omega)}^4+\|\partial_t b\|_{L_p^2(\Omega)}^4\right).
\end{align*}
Therefore, we have
\begin{align*}
    &\frac{d}{dt} \|\partial_t u_m\|^2_{L_p^2(\Omega)} +  \|\Delta \partial_t u_m\|^2_{L_p^2(\Omega)} \\
    \leq & C_{12} \left(1 + \|\partial_t u_m\|^2_{L_p^2(\Omega)}+ \|\partial_t u_m\|^4_{L_p^2(\Omega)} \right),
\end{align*}
where \(C_{12}\) depends on \(\|u_m\|_{L_p^2(\Omega)}, \|\partial_t p\|_{L_p^2(\Omega)}, \|b\|_{L_p^2(\Omega)}, \|\partial_t b\|_{H_p^1(\Omega)}\). Integrating this inequality over $[0,s]$, we obtain
\begin{align*}
\|\partial_t u_m(s)\|^2_{L_p^2(\Omega)}+\|\Delta \partial_t u_m\|^2_{L^2(0,T;L_p^2(\Omega))}\leq C_{12}s+C_{12}\int_0^s \|\partial_t u_m\|^2_{L_p^2(\Omega)}+ \|\partial_t u_m\|^4_{L_p^2(\Omega)} \mathrm{d}t
\end{align*}
According to the extended Gronwall inequality \cite{J18} and taking the supremum over $s$ on $[0,T]$, we deduce that
\begin{align*}
    \|\partial_t u_m\|_{L^{\infty}(0,T;L_p^2(\Omega))}+ \|\Delta \partial_t u_m\|_{L^{2}(0,T;L_p^2(\Omega))}\leq C_{13}.
\end{align*}
Since $\|\partial_t u_m\|_{L^2(0,T;H^2_p(\Omega))}$ is equivalent to $\|\Delta \partial_t u_m\|_{L^2(0,T;L^2_p(\Omega))}$ as in \cite{A03}, we have $\partial_t u_m\in L^2(0,T;H^2_p(\Omega))$.

As in the proof of Theorem \(\ref{lemma: WP of Linear CH equation}\), we consider \(w \in H_p^2(\Omega)\) with \(\|w\|_{H_p^2(\Omega)} \leq 1\). Then, for each \(t \in [0, T]\)
\begin{align*}
    \langle \partial_t^2 u_m, w \rangle = (\partial_t^2 u_m, w) = (\partial_t^2 u_m, \mathcal{P}_m (w)).
\end{align*}
Differentiating \(\eqref{equ:weak form of um}\) with respect to \(t\), we get
\begin{align*}
    \langle \partial_t^2 u_m, w \rangle = (\partial_t^2 u_m, \mathcal{P}_m (w)) = R_3,
\end{align*}
where 
\begin{align*}
    R_3 := (\partial_t p, \mathcal{P}_m (w)) - (\Delta \partial_t u_m, \Delta \mathcal{P}_m (w)) - \left(\nabla \left(u_m \partial_t b + b \partial_t u_m\right), \mathcal{P}_m (\nabla w)\right).
\end{align*}
Thus, we have the following estimate
\begin{align*}
    |\langle \partial_t^2 u_m, w \rangle| \leq |R_3| \leq C_{14} \left(\|\partial_t p\|^2_{L_p^2(\Omega)} + \|\partial_t u_m\|^2_{H_p^2(\Omega)} + \|u_m\|^2_{H_p^2(\Omega)} + \|\partial_t b\|^2_{H_p^1(\Omega)} + \|b\|^2_{H_p^1(\Omega)}\right).
\end{align*}
Based on this estimate, we conclude that
\begin{align*}
    \|\partial_t^2 u_m\|_{L^2(0, T; H_p^{-1}(\Omega))} \leq C_{15}.
\end{align*} 
		
  Letting $m\to \infty$, we deduce that there exists a unique solution \( \begin{pmatrix} u \\ v \end{pmatrix} \) satisfying equation (\ref{equ:linear CH}).
	\end{proof}
	
\begin{remark}
		\label{remark:higher regularity}
		When the source term and initial conditions have a higher order regularity, we can infer that $u$ also possesses a better regularity by induction. In particular, if
$\psi\in H_p^{q+2}(\Omega)$ and $\frac{d^k p}{dt^k}  \in \mathcal{L}^{q-2k}$
for an even integer $q\ge 0$ with $k=0,\cdots,q/2$, we have $\frac{d^ku}{dt^k}\in \mathcal {L}^{q+4-2k}$ for $k=0,\cdots,q/2$. 
\end{remark}
Now we are ready to give the proof of Theorem \ref{theo:LWP}.
\begin{proof}[Proof of Theorem \ref{theo:LWP}]
Define the spaces and sets as follows
\begin{align*}
    V_0 &= \left\{ u\in \mathcal{L}^{q+4}: u(\cdot,0) \in H_p^{q+2}(\Omega),\frac{d^k u}{dt^k}\in\mathcal{L}^{q+4-2k},k=0,\cdots,q/2\right\}, \\
		V_1 &= \left\{u\in \mathcal{L}^{q+2}: u(\cdot,0) \in H_p^{q+1}(\Omega),\frac{d^k u}{dt^k}\in\mathcal{L}^{q+2-2k},k=0,\cdots,q/2\right\},\\
  V_2 &= \left\{p\in\mathcal{L}^q: \frac{d^k p}{dt^k}\in\mathcal{L}^{q-2k},k=0,\cdots,q/2 \right\},
\end{align*}
	and 
	\begin{align*}
 			U_0 &= \left\{\overrightarrow{u}:=(u_0,u_1,u_2,u_3) \in V_0 \times \left(V_1\right)^3 \right\}, \\
	U_1 &= \left\{\overrightarrow{\varphi}:=(\varphi_0,\varphi_1,\varphi_2,\varphi_3)\in H_p^{q+2}(\Omega) \times \left(H_p^{q+1}(\Omega)\right)^3\right\},\\
  U_2 &=  \left\{\overrightarrow{p}:=(p_0,p_1,p_2,p_3)\in\left(V_2\right)^4\right\},
	\end{align*}
	where  even number $q\ge d/2$.
We also define the error operator as
\begin{align*}
	&E : U_0 \times U_1 \to U_2\times U_1, 
	\end{align*}
	with
	\begin{align*}
E(\overrightarrow{u};\overrightarrow{\varphi}) = [E_{0},E_{1},E_{2},E_{3};I_{0},I_{1},I_{2},I_{3}],
\end{align*}
where $\overrightarrow{u}\in U_0$ and $\overrightarrow{\varphi}\in U_1$, and each component is defined as follows
\begin{align*}
	\begin{cases}
		E_0 = \partial_t u_0 + c_1\Delta^2 u_0 - \Delta \left(f_0(x,t,\overrightarrow{u})+g(x,t,u_0)\right), \\
		E_{i} = \partial_t u_i-c_2\Delta u_i-f_i(x,t,\overrightarrow{u}), \\
		I_0 = u_0(\cdot,0) - \varphi_0(\cdot), \\
		I_{i} = u_i(\cdot,0)-\varphi_i(\cdot),
	\end{cases}
\end{align*}
for $i=1,2,3$. To show that the error mapping $E$ is well-defined, we need to demonstrate that $\Delta f_0(x,t,\overrightarrow{u})$, $\Delta g(x,t,u_0)$, and $f_i(x,t,\overrightarrow{u})$ are in $V_2$. Let's take $\Delta g(x,t,u_0)$ and $k=0$ as an example. We need to prove the following condition holds based on the admissible conditions \ref{def:admissible 1}
\begin{align}\label{gl}
	\sum\limits_{\ell=1}^\infty \frac{1}{\ell!} \Delta \left(g^{(\ell)}(x,t)u_0^\ell \right)\in \mathcal{L}^q.
\end{align}	
This can be proved by expanding $\Delta \left(g^{(\ell)}(x,t)u_0^\ell \right)$ as
\begin{align*}
	&\Delta \left(g^{(\ell)}(x,t)u_0^\ell(x,t) \right)\\
	=&u_0^\ell(x,t)\Delta g^{(\ell)}(x,t)+g^{(\ell)}(x,t)\Delta u_0^\ell(x,t)+2\nabla g^{(\ell)}(x,t)\cdot \nabla u_0^{\ell}(x,t),
\end{align*}
which leads to the following inequality
\begin{align*}
	&\|\Delta \left(g^{(\ell)}(x,t)u_0^\ell(x,t) \right)\|_{\mathcal{L}^{q}}\\
	\leq &2\|g^{(\ell)}(x,t)\|_{\mathcal{L}^{q+2}}\left( \|u_0^\ell(x,t)\|_{\mathcal{L}^{q}}+\|\Delta u_0^\ell(x,t)\|_{\mathcal{L}^{q}}+\|\nabla u_0^\ell(x,t)\|_{\mathcal{L}^{q}}\right)\\
	\leq &2\|g^{(\ell)}(x,t)\|_{\mathcal{L}^{q+2}}\|u_0^\ell\|_{\mathcal{L}^{q+2}}\\
	\leq &2\|g^{(\ell)}(x,t)\|_{\mathcal{L}^{q+2}}\|u_0\|^\ell_{\mathcal{L}^{q+2}}.
\end{align*}
Since $u_0\in \mathcal{L}^{q+4}$, which implies that $\|u_0\|_{\mathcal{L}^{q+2}}$ is bounded, we have
\begin{align*}
		\left\|\sum\limits_{\ell=1}^\infty \frac{1}{\ell!} \Delta \left(g^{(\ell)}(x,t)u_0^\ell \right)\right\|_{\mathcal{L}^{q}}
		\leq 2\sum\limits_{\ell=1}^{\infty}\left\| g^{(\ell)}(x,t)\right\|_{\mathcal{L}^{q+2}}\frac{\|u_0\|^\ell_{\mathcal{L}^{q+2}}}{\ell!}
		\leq  C\exp{\left(\|u_0\|_{\mathcal{L}^{q+2}}\right)},
\end{align*}
where $C$ is a constant depending only on the uniform upper bound of $\left\| g^{(\ell)}(x,t)\right\|_{\mathcal{L}^{q+2}}$. Hence the condition \eqref{gl} holds. The proof for $\Delta f_0(x,t,\overrightarrow{u})$ and $f_i(x,t,\overrightarrow{u})$ follows a similar argument.

In order to show the local well-posedness of equation \eqref{equ:nonlinear system}, we will apply the implicit function theorem to the error operator  $E$. Before that it is necessary to prove  $E$ is analytic. By Lemma \ref{lemma:analytic}, one has to show  $E$ is locally bounded and weakly analytic. Specifically, the nonlinear part of $E$ must be weakly analytic. We also take $g(x,t,u_0)$ as an example. Due to the admissible condition for $g(x,t,\cdot)$, for any $u_{0,1}, u_{0,2} \in V_0$, let us consider the map
\begin{align*}
\lambda \mapsto \Delta \left(g(x,t,u_{0,1}+ \lambda u_{0,2}) \right).
\end{align*}
It holds
\begin{align*}
g(x,t,u_{0,1}+ \lambda u_{0,2})= \sum\limits_{\ell=1}^\infty \frac{g^{(\ell)}(x,t)(u_{0,1}+ \lambda u_{0,2})^\ell}{\ell!},
\end{align*}
which can be written as a polynomial of $\lambda$. Because of the boundedness of $u_{0,1}$, $u_{0,2}$, and $g^{(\ell)}$, it implies that there is a neighborhood of the origin in the complex plane that makes the series convergent, which proves that $E$ is analytic.

Recall that  $E(\overrightarrow{0};\overrightarrow{0})=0$ according to the Remark \ref{rmk:admissible condition}.  We proceed to calculate the Fr\'{e}chet derivative of $E$ with respect to  $\overrightarrow{u}$ at $\overrightarrow{0}$. Letting $\overrightarrow{h}=(h_0,h_1,h_2,h_3)\in U_0$, we have
  \begin{align*}    E(\overrightarrow{h};\overrightarrow{0})-E(\overrightarrow{0};\overrightarrow{0})
    =&\left[\partial_t h_0+c_1\Delta^2 h_0-\Delta \left(g^{(1)}(x,t)h_0+c_{1000}(x,t)h_0\right)+N_0(\overrightarrow{h})\right.,\\
    &\left.\partial_th_1-c_2\Delta h_1-b_1(x,t)h_1 +N_1(\overrightarrow{h})\right.,\\
    &\left.\partial_th_2-c_2\Delta h_2-b_2(x,t)h_2+N_2(\overrightarrow{h})\right.,\\
    &\left.\partial_th_3-c_2\Delta h_3-b_3(x,t)h_3+N_3(\overrightarrow{h})\right.,\\
&\left.h_0(x,0),h_1(x,0),h_2(x,0),h_3(x,0)\right]\\=&[A_0(h_0),A_1(h_1),A_2(h_2),A_3(h_3),\overrightarrow{h}(x,0)]\\&+[N_0(\overrightarrow{h}),N_1(\overrightarrow{h}),N_2(\overrightarrow{h}),N_3(\overrightarrow{h}),\overrightarrow{0}],
    \end{align*}
where  $A_0: V_0\mapsto V_2$ and $A_i: V_1\mapsto V_2$, $i=1,2,3,$ are the linear maps of  $\overrightarrow{h}$ and each component of $[N_0,N_1,N_2,N_3]:U_0\mapsto U_2$ is nonlinear, defined as 
\begin{align*}
&N_0(\overrightarrow{h}) = -\sum\limits_{s=2}^{\infty}\frac{1}{s!}\sum\limits_{l_0+l_1+l_2+l_3=s}c_{l_0l_1l_2l_3}(x,t)h_0^{l_0}h_1^{l_1}h_2^{l_2}h_3^{l_3}-\sum\limits_{l=2}^{\infty}\frac{g^{(l)}(x,t)}{l!}h_0^l,\\
&N_i(\overrightarrow{h})=-\sum\limits_{s=2}^{\infty} \frac{1}{s!}\sum\limits_{l_0+l_1+l_2+l_3=s} b_{i,l_0l_1l_2l_3}h_0^{l_0}h_1^{l_1}h_2^{l_2}h_3^{l_3},\,\,\,\text{for }i=1,2,3.
\end{align*}

It holds
$$\lim_{\|\overrightarrow{h}\|\to 0}\frac{\|N_i(\overrightarrow{h})\|}{\|\overrightarrow{h}\|}=0,$$ 
for $i=0,1,2,3$. Then the Fr\'{e}chet derivative of $E$ with respect to  $\overrightarrow{u}$ at $\overrightarrow{0}$, denoted as $E_{\overrightarrow{u}}(\overrightarrow{0})$, is expressed as \begin{align*}
    E_{\overrightarrow{u}}(\overrightarrow{0})\overrightarrow{w} = [A_0(w_0),A_1(w_1),A_2(w_2),A_3(w_3),w_0(x,0),w_1(x,0),w_2(x,0),w_3(x,0)],
\end{align*}
where $\overrightarrow{w}=(w_0,w_1,w_2,w_3)\in U_0$.
The  Fr\'{e}chet derivative $E_{\overrightarrow{u}}(\overrightarrow{0})$ is an isomorphism if and only if the following two equations are globally well-posed
\begin{equation*}
    \left\{
    \begin{aligned}
        &\partial_t u+c_1\Delta^2 u-\Delta \left[g^{(1)}(x,t)u+c_{1000}(x,t)u\right]=p_1(x,t),\ &&\text{in}\ Q,\\
        &u(x,0)=\varphi(x), &&\text{in}\ \Omega,\\
        &u(x)\ \text{satisfies PBC,}\ &&\text{on}\ \Sigma,
    \end{aligned}
    \right.
\end{equation*}
and 
\begin{equation*}
    \left\{
    \begin{aligned}
        &\partial_tv-c_2\Delta v-b_1(x,t)v=p_2(x,t)\ &&\text{in}\ Q,\\
        &v(x,0)=\psi(x) &&\text{in}\ \Omega,\\
        &v(x)\ \text{satisfies PBC,}\ &&\text{on}\ \Sigma,
    \end{aligned}
    \right.
\end{equation*}
where $\varphi\in H_p^{q+2}(\Omega),\psi\in H_p^{q+1}(\Omega), p_1,p_2\in V_2$ and $g^{(1)},c_{1000},b_1$ are given in the admissible conditions \ref{def:admissible 1}. Their well-posedness follows from Lemmas \ref{lemma:WP of Linear parabolic equation}, \ref{lemma: WP of Linear CH equation}, \ref{lemma:linear CH inprove regularity}, and Remark \ref{remark:higher regularity}. Hence, we obtain that $E_{\overrightarrow{u}}(\overrightarrow{0})$ is an isomorphism. 

Combining the fact that  $E(\overrightarrow{0};\overrightarrow{0})=0$, and making use of the implicit function theorem given by Lemma \ref{lemma:implicit function},  we conclude that if
\begin{align*}
    \mathcal{N}_{\delta}=\left\{\overrightarrow{\varphi}\in U_1;\|\varphi_0\|_{H_p^{q+2}(\Omega)}+\sum\limits_{i=1}^3\|\varphi_i\|_{H_p^{q+1}(\Omega)}<\delta\right\},
\end{align*}
for a small $\delta>0$, then there exists function $\overrightarrow{F}:\mathcal{N}_{\delta}\to U_0$ satisfying $E(\overrightarrow{\varphi};\overrightarrow{F}(\overrightarrow{\varphi}))=0$, which implies the existence to equation (\ref{equ:nonlinear system}). Finally, due to the local Lipschitz property of $\overrightarrow{F}$, we have

\begin{align*}
\sum\limits_{k=0}^{q/2}\left(\left\|\frac{d^ku_0}{dt^k}\right\|_{\mathcal{L}^{q+4-2k}}+\sum\limits_{i=1}^3\left\|\frac{d^ku_i}{dt^k}\right\|_{\mathcal{L}^{q+2-2k}}\right)\leq C\delta,
\end{align*}
where $C$ is a constant depending on $T$ and $\Omega$, which ends the proof.
\end{proof}

\section{Uniqueness proof on the inverse problems}
\label{section:Proof of inverse problem}
In this section, we will employ the higher order linearization method~\cite{KLU14} to deal with the nonlinearity and coupling properties of the phase-field system \eqref{equ:nonlinear system}, and subsequently provide uniqueness proofs for inverse problems (IP1) and (IP2).
\subsection{Higher order linearization}

In the proof of Theorem \ref{theo:LWP}, we obtain the smooth dependence of the solution $\overrightarrow{u}$ on the initial value $\overrightarrow{\varphi}$. Therefore, for any $n\ge 1$, assuming the initial value $\overrightarrow{\varphi}$ is a polynomial of a small parameter $\epsilon$ of degree $n$,  $\overrightarrow{u}$ must be smooth with respect to $\epsilon$. More specifically, let us assume that the initial value of equation (\ref{equ:nonlinear system}) can be expanded as follows

\begin{align}
\label{equ:initial_value_expansion}
	\begin{cases}
    u_0(x,0)=\varphi_0(x)=\epsilon\varphi_0^{(1)}(x),\\
    u_i(x,0)=0,\ i=1,2,3.
    \end{cases}
\end{align}

\begin{remark}
 In contrast to the standard higher order linearization approach, which is formulated as
\begin{align*}
u_0(x,0) = \varphi_0 = \sum\limits_{j=1}^n\varepsilon_j\varphi_0^{(j)}(x), \end{align*} our method can be viewed as a special case with $n=1$. Given that we have access to all interior measurements at a given time, this formulation is sufficient for application in the following inverse problems. Additionally, it is also more efficient in handling constraints on the initial concentration field, such as non-negativity constraints.
\end{remark}
In order to ensure the well-posedness of equation (\ref{equ:nonlinear system}) as stated in Theorem \ref{theo:LWP}, we choose that

\begin{align}
\label{equ:condtion_initial_velue}
\epsilon\|\varphi_0^{(1)}\|_{H_p^{q+2}(\Omega)}\leq \delta,\,\,\, \text{with }\varphi_0^{(1)}\not\equiv 0\text{ in $\Omega$},
\end{align}
where $\delta>0$ is a small parameter introduced in Theorem \ref{theo:LWP}.

Now, let us express the solution of equation (\ref{equ:nonlinear system}) as a function of $\epsilon$:
\begin{align*}
	\begin{cases}
    u_0(\epsilon) = u_0(x,t,\epsilon),\\
    u_i(\epsilon) = u_i(x,t,\epsilon), \ i=1,2,3,\\
    \overrightarrow{u}(\epsilon) = [u_0(\epsilon),u_1(\epsilon),u_2(\epsilon),u_3(\epsilon)],
    \end{cases}
\end{align*}
with $\overrightarrow{u}(0) = \overrightarrow{0}$ and define
\begin{align*}
	\begin{cases}
    u_0^{(1)}:=\frac{\partial u_0}{\partial \epsilon}\big|_{\epsilon=0},\\
 u_i^{(1)}:=\frac{\partial u_i}{\partial \epsilon}\big|_{\epsilon=0}, \ i=1,2,3, 
  \end{cases}
\end{align*}
where $u_0^{(1)}$ and $u_i^{(1)}$ satisfy the PBC on  $\Sigma$. 
For the time derivative $\partial_t u_i$, we have 
\begin{align}
\label{equ:time derivative}
    \frac{\partial (\partial_t u_i)}{\partial \epsilon}\bigg|_{\epsilon=0}=\lim\limits_{\epsilon\to 0} \frac{\partial_t \left(u_i(x,t,\epsilon)-u_i(x,t,0)\right)}{\epsilon}=\partial_t \lim\limits_{\epsilon\to 0}\frac{u_i(x,t,\epsilon)}{\epsilon} = \partial_t u_i^{(1)}
\end{align}
Meanwhile, for the nonlinear function $f_1$, it holds
\begin{equation}
    \label{equ:nonlinear derivative}
    \begin{aligned}
    &\frac{\partial f_1(x,t,\overrightarrow{u})}{\partial \epsilon}\bigg|_{\epsilon=0}\\=&\lim\limits_{\epsilon\to 0}\frac{f_1\left(x,t,\overrightarrow{u}(\epsilon)\right)-f_1\left(x,t,\overrightarrow{u}(0)\right)}{\epsilon}\\
    =&b_1(x,t)u_1^{(1)}+\lim\limits_{\epsilon\to 0}\frac{1}{\epsilon}\sum\limits_{s=2}^{\infty} \frac{1}{s!}\sum\limits_{l_0+l_1+l_2+l_3=s} b_{1,l_0l_1l_2l_3}(x,t)u_0^{l_0}(\epsilon)u_1^{l_1}(\epsilon)u_2^{l_2}(\epsilon)u_3^{l_3}(\epsilon)\\
    =&b_1(x,t)u_1^{(1)}.
\end{aligned}
\end{equation}
One can analogously find  the derivatives of $f_2$ and $f_3$ with respect to $\epsilon$. We then differentiate both sides of equation (\ref{equ:nonlinear system}) with respect to $\epsilon$ and let $\epsilon=0$. It yields that $u_1^{(1)}$ satisfies the following equation:
\begin{equation}
\label{equ:linear_parabolic_eqution}
    \left\{
    \begin{aligned}
        &\partial_tu_1^{(1)}=c_2\Delta u_1^{(1)}+b_1(x,t)u_1^{(1)},&&\text{in}\ Q,\\
        &u^{(1)}_1(x,0)=0,&&\text{in}\ \Omega,\\
        &u_1^{(1)}(x,t)\ \text{satisties PBC,}&&\text{on}\ M.
    \end{aligned}
    \right.
\end{equation}
Note that $u^{(1)}_1(x,t)\equiv 0$ is a unique solution to equation \eqref{equ:linear_parabolic_eqution} by Lemma \ref{lemma:WP of Linear parabolic equation}. Similarly, it also holds that $u^{(1)}_2(x,t)=u^{(1)}_3(x,t)\equiv 0$.

For the nonlinear functions $f_0$ and $g$, we follow the same approach to find 
\begin{align*}
    \frac{\partial f_0(x,t,\overrightarrow{u})}{\partial \epsilon}\bigg|_{\epsilon=0}=&\lim\limits_{\epsilon\to 0}\frac{1}{\epsilon}\sum\limits_{s=1}^{\infty} \frac{1}{s!}\sum\limits_{l_0+l_1+l_2+l_3=s} c_{l_0l_1l_2l_3}(x,t)u_0^{l_0}(\epsilon)u_1^{l_1}(\epsilon)u_2^{l_2}(\epsilon)u_3^{l_3}(\epsilon)\\
    =&\sum\limits_{s=1}^{\infty}\frac{1}{s!}c_{s000}(x,t)\lim\limits_{\epsilon\to 0}\frac{u_0^s(\epsilon)}{\epsilon}\\
    =&c_{1000}u_0^{(1)},
    \end{align*}
    and
    \begin{align*}
\frac{\partial g_0(x,t,u_0(\epsilon))}{\partial \epsilon}\bigg|_{\epsilon=0}
=&\sum\limits_{k=1}^{\infty}\frac{1}{k!}g^{(\ell)}(x,t)\lim\limits_{\epsilon\to 0}\frac{u_0^{k}(\epsilon)}{\epsilon}\\
=&g^{(1)}(x,t)u_0^{(1)}.
\end{align*}
This leads to the equation for $u_0^{(1)}$:
\begin{equation}
\label{equ:first order linearization}
    \left\{
    \begin{aligned}
        &\partial_tu_0^{(1)}=-c_1\Delta^2 u_0^{(1)}+\Delta\left(c_{1000}(x,t)u_0^{(1)}+g^{(1)}(x,t)u_0^{(1)}\right) ,&&\text{in}\ Q,\\
        &u^{(1)}_0(x,0)=\varphi^{(1)}_0,&&\text{in}\ \Omega,\\
        &u_0^{(1)}(x,t)\ \text{satisties PBC,}&&\text{on}\ \Sigma.
    \end{aligned}
    \right.
\end{equation}
For the second order of $\overrightarrow{u}$, we can define 
\begin{align*}
	\begin{cases}
    u_0^{(2)}:=\frac{\partial^2 u_0}{\partial \epsilon^2}\big|_{\epsilon=0},\\
 u_i^{(2)}:=\frac{\partial^2 u_i}{\partial \epsilon^2}\big|_{\epsilon=0}, \ i=1,2,3, \\
  \end{cases}
\end{align*}
where $u_0^{(2)}$ and $u_i^{(2)}$ satisfy PBC on $\Sigma$. Take the second order derivative with respect to $\epsilon$ for equation (\ref{equ:nonlinear system}). Similar to equations \eqref{equ:time derivative} and \eqref{equ:nonlinear derivative}, we end up with equation
\begin{equation*}
     \left\{
    \begin{aligned}
        &\partial_t u_1^{(2)}=c_2\Delta u_1^{(2)}+b_1(x,t)u_1^{(2)},\ &&\text{in}\ Q,\\
        &u^{(2)}_1(x,0)=0,\ &&\text{in}\ \Omega,\\
        &u_1^{(2)}(x,t)\ \text{satisties PBC,}&&\text{on}\ \Sigma.
    \end{aligned}
    \right.
\end{equation*}
It holds $u_1^{(2)}(x,t)=u_2^{(2)}(x,t)=u_3^{(2)}(x,t)\equiv 0$ in $Q$ and $u_0^{(2)}$ satisfies
\begin{equation}
\label{equ:second order linearization}
    \left\{
    \begin{aligned}
        &\partial_tu_0^{(2)}=-c_1\Delta^2 u_0^{(2)}\\
        &+\Delta\left[\left(g^{(1)}(x,t)+c_{1000}(x,t)\right)u_0^{(2)}+\left(g^{(2)}(x,t)
        +c_{2000}(x,t)\right)\left(u_0^{(1)}\right)^2
        \right],&&\text{in}\ Q,\\
        &u^{(2)}_0(x,0)=0,&&\text{in}\ \Omega,\\
        &u_0^{(2)}(x,t)\ \text{satisties PBC,}&&\text{on}\ \Sigma.\\
    \end{aligned}
    \right.
\end{equation}
In general, we can consider the $\ell$th-order derivative with respect to $\epsilon$ for $1\leq \ell \leq n$,
\begin{align*}
	\begin{cases}
    u_0^{(\ell)}:=\frac{\partial^{\ell} u_0}{\partial \epsilon^{\ell}}\big|_{\epsilon=0},\\
 u_i^{(\ell)}:=\frac{\partial^{\ell} u_i}{\partial \epsilon^{\ell}}\big|_{\epsilon=0}, \ i=1,2,3,
  \end{cases}
\end{align*}
where $u_0^{(\ell)}$ and $u_i^{(\ell)}$ satisfy PBC on $\Sigma$. We assert that $u_i^{(\ell)}\equiv 0$, $i=1,2,3$, in $Q$ by the same argument as before. Following the same calculations as \eqref{equ:time derivative} and \eqref{equ:nonlinear derivative}, we have that $u_0^{(\ell)}$ satisfies the equation:
\begin{equation}
\label{equ:ell order linearization}
    \left\{
    \begin{aligned}
        &\partial_t u_0^{(\ell)} = -c_1\Delta^2 u_0^{(\ell)}\\
        &+\Delta \left(\sum\limits_{i_1+2i_2+\cdots+(\ell-1)i_{\ell-1}=\ell}d_{\ell}c_{m_{\ell}000}(x,t)\left(u_0^{(1)}\right)^{i_1}\cdots \left(u_0^{(\ell-1)}\right)^{i_{\ell-1}}\right)\\
        &+\Delta\left( \sum\limits_{i_1+2i_2+\cdots+(\ell-1)i_{\ell-1}=\ell}d_{\ell}g^{(m_{\ell})}(x,t)\left(u_0^{(1)}\right)^{i_1}\cdots \left(u_0^{(\ell-1)}\right)^{i_{\ell-1}}\right), &&\text{in}\ Q,\\
        &u_0^{(\ell)}(x,0)=0,&&\text{in}\ \Omega,\\
        &u_0^{(\ell)}(x,t)\ \text{satisties PBC,}&&\text{on}\ \Sigma.
    \end{aligned}
    \right.
\end{equation}
where $m_{\ell} = \sum\limits_{j=1}^{\ell-1}i_j$ and $d_{\ell}$ is a coefficient independent of $x$ and $t$.

\subsection{Proof of uniqueness on (IP1)}
 We are now ready to prove Theorem \ref{theo:IP1} for the  unique determination of $g$ that is independent of time. 
In order to establish the uniqueness, our main tool is the higher order linearization method introduced in the last subsection. This involves applying higher order linearization to the initial values, and using integration by parts to establish a connection between the measurements at a given time and the nonlinear coefficients  $g^{(\ell)}$. Details are given in the following.

Firstly, under the conditions of (IP1), $g^{(\ell)}$ depends solely on $x$ for all $\ell$. Let us consider the measurement  $\overrightarrow{M}_s(g,\overrightarrow{\varphi})$ given by
\begin{align*}
\overrightarrow{M}_s(g,\overrightarrow{\varphi}) = [u_0(x,t_1,\epsilon),\partial_t u_0(x,t_1,\epsilon)],
\end{align*}
for some $t_1\in (0,T]$ with initial value $\overrightarrow{\varphi}(\epsilon)$ given by equation \eqref{equ:initial_value_expansion}, so that the $\ell$th-order linearization of $\overrightarrow{M}_s(g,\overrightarrow{\varphi})$ is
\begin{align*}
    \overrightarrow{M}^{(\ell)}_s(g,\overrightarrow{\varphi})=\partial^{\ell}_{\epsilon}\overrightarrow{M}_s(g,\overrightarrow{\varphi})\big|_{\epsilon=0} &= \left[\partial^{\ell}_{\epsilon}u_0(x,t_1,\epsilon)\big|_{\epsilon=0} ,\partial^{\ell}_{\epsilon}\partial_t u_0(x,t_1,\epsilon)\big|_{\epsilon=0}\right]\\&=\left[u_0^{(\ell)}(x,t_1),\partial_t u_0^{(\ell)}(x,t_1)\right],
\end{align*}
for $1\leq \ell\leq n$.

To prove that we can uniquely determine $g^{(1)}(x)$ with the measurement $\overrightarrow{M}_s^{(1)}$, we denote $u_{j,0}$, the solution for system \eqref{equ:nonlinear system} with energy $g_j$, $j=1,2$. One can make use of equation \eqref{equ:first order linearization} to derive the equation for the first order linearization $u_{j,0}^{(1)}$ of $u_{j,0}$:
\begin{equation}
\label{equ:u_j^1}
    \left\{
    \begin{aligned}
        &\partial_t u_{j,0}^{(1)}=-c_1\Delta^2 u_{j,0}^{(1)}+\Delta \left(g_j^{(1)}(x)u_{j,0}^{(1)}+c_{1000}(x,t)u_{j,0}^{(1)}\right),&&\text{in}\ Q,\\
        &u_{j,0}^{(1)}(x,0)=\varphi_0^{(1)}(x),&&\text{in}\ \Omega,\\
         &u_{j,0}^{(1)}(x,t)\ \text{satisties PBC,}&&\text{on}\ \Sigma.
    \end{aligned}
    \right.
\end{equation}
 The two systems with $j=1, 2$ have the same measurement, i.e.
\begin{align*}
   u_{1,0}^{(1)}(x,t_1)=u_{2,0}^{(1)}(x,t_1),\ \partial_t u_{1,0}^{(1)}(x,t_1)=\partial_t u_{2,0}^{(1)}(x,t_1).
\end{align*}
Let $v^{(1)}=u_{1,0}^{(1)}-u_{2,0}^{(1)}$. Then it holds
\begin{align}
\label{equ:property u1}
    v^{(1)}(x,0)=v^{(1)}(x,t_1)=\partial_t v^{(1)}(x,t_1)=0,\ \text{for all}\ x\in\Omega.
\end{align}

By subtracting equation (\ref{equ:u_j^1}) with $j=1$ from the same equation with $j=2$ and multiplying both sides by any function $w\in H_p^2(\Omega)$, we then integrate over $\Omega$ to obtain
\begin{align*}
   \int_{\Omega}w\partial_t v^{(1)}\mathrm{d}x =& \int_{\Omega}-c_1w\Delta^2 v^{(1)}+w\Delta\left(g_1^{(1)}(x)v^{(1)}\right) \\&+w\Delta\left(g_1^{(1)}(x)u_{2,0}^{(1)}-g_{2}^{(1)}(x) u_{2,0}^{(1)}\right)\mathrm{d}x.
\end{align*}
Substituting $t=t_1$ in the integral and using the condition  (\ref{equ:property u1}) lead to
\begin{align*}
    \int_{\Omega}w\Delta\left[\left(g_1^{(1)}(x)-g_2^{(1)}(x)\right)u_{2,0}^{(1)}(x,t_1)\right]\mathrm{d}x=0.
\end{align*}
To simplify the notations,  let $u_0^{(1)}(x,t_1)=u_{1,0}^{(1)}(x,t_1)=u_{2,0}^{(1)}(x,t_1)$. Applying integration by parts, we obtain
\begin{align*}
    \int_{\Omega}\nabla w\cdot \nabla \left[\left(g_1^{(1)}(x)-g_2^{(1)}(x)\right)u_{0}^{(1)}(x,t_1)\right]\mathrm{d}x=0.
\end{align*}
By setting \( w = \left(g_1^{(1)}(x) - g_2^{(1)}(x)\right) u_{0}^{(1)}(x,t_1) \), we observe that \( \left(g_1^{(1)}(x) - g_2^{(1)}(x)\right) u_{0}^{(1)}(x,t_1) \) is constant almost everywhere in \(\Omega\). However, due to the arbitrariness of the initial condition \(\varphi_0^{(1)}(x)\), $u_{0}^{(1)}(x,t_1)$ is also an arbitrarily varying  function from equation \eqref{equ:u_j^1}.  This directly implies that \(g_1^{(1)}(x) = g_2^{(1)}(x)\) almost everywhere in \(\Omega\). A discussion on inverting \(g_1^{(1)}(x)\) in a special case can be found in Remark \ref{remark41}.

Let us denote $g^{(1)}(x) = g_1^{(1)}(x)=g_2^{(1)}(x)$. Analogous to the proof of determining the first order coefficient, using \eqref{equ:second order linearization} one can show the second order linearization $u^{(2)}_{j,0}$ satisfies
\begin{equation}
\label{equ:u_j^2}
    \left\{
    \begin{aligned}
        &\partial_tu_{j,0}^{(2)}=-c_1\Delta^2 u_{j,0}^{(2)}\\
        &+\Delta\left(\left(g^{(1)}(x)+c_{1000}(x,t)\right)u_{j,0}^{(2)}+\left(g_j^{(2)}(x)
        +c_{2000}(x,t)\right)\left(u_0^{(1)}\right)^2
        \right),&&\text{in}\ Q,\\
        &u_{j,0}^{(2)}(x,0)=0,&&\text{in}\ \Omega,\\
        &u_{j,0}^{(2)}(x,t)\ \text{satisties PBC,}&&\text{on}\ \Sigma.
    \end{aligned}
    \right.
\end{equation}
for $j=1,2$. The two systems have the same measurement, that is, 
\begin{align*}
   u_{1,0}^{(2)}(x,t_1)=u_{2,0}^{(2)}(x,t_1),\ \partial_t u_{1,0}(x,t_1) =\partial_t u_{2,0}(x,t_1).
\end{align*}
Let $v^{(2)}=u_{1,0}^{(2)}-u_{2,0}^{(2)}$. Then we have
\begin{align}
\label{equ:property u}
    v^{(2)}(x,0)=v^{(2)}(x,t_1)=\partial_t v^{(2)}(x,t_1)=0,\ \text{for all}\ x\in\Omega.
\end{align}
By subtracting equation \eqref{equ:u_j^2} with $j=1$ from the same equation with $j=2$, and multiplying both sides by a test function $w\in H_p^2(\Omega)$, we integrate over $\Omega$ and substitute $t=t_1$ to obtain:
\begin{align}
    \int_{\Omega}w\Delta \left[\left(g_1^{(2)}(x)-g_2^{(2)}(x)\right)\left(u_0^{(1)}(x,t_1)\right)^2\right]\mathrm{d}x=0.
\end{align}
Following the same argument, we conclude that
 \begin{align}
 \label{equ:tilde and hat a}
     \left(g_1^{(2)}(x)-g_2^{(2)}(x)\right)\left(u_0^{(1)}(x,t_1)\right)^2\text{ is constant a.e. in } \Omega,
 \end{align}
which implies $g_1^{(2)}(x)=g_2^{(2)}(x)$ in $\Omega$ due to the arbitrariness of $u_0^{(1)}(x,t_1)$. We thus denote $g^{(2)}=g_1^{(2)}=g_2^{(2)}$ in $\Omega$.

We make use of mathematical induction to prove the uniqueness of inversion for $g^{(\ell)}, \ell> 2$. Assume that $g^{(1)}, \ldots, g^{(\ell-1)}$ have been uniquely determined through measurements $\overrightarrow{M}_s(g_j,\overrightarrow{\varphi})$, $j=1,2$. To determine $g^{(\ell)}$, we consider $u_{1,0}^{(\ell)}$ and $u_{2,0}^{(\ell)}$ as the solutions associated with the coefficients $g_1^{(\ell)}$ and $g_2^{(\ell)}$, respectively. They satisfy the following systems as derived from equation \eqref{equ:ell order linearization}:
\begin{equation*}
\label{equ:hat uk}
    \left\{
    \begin{aligned}
        &\partial_t u_{j,0}^{(\ell)} = -c_1\Delta^2 u_{j,0}^{(\ell)}\\
        &+\Delta \left[\left(g^{(1)}(x)+c_{1000}(x,t)\right)u_{j,0}^{(\ell)}+ \left(g_j^{(\ell)}(x)+c_{\ell000}(x,t)\right)\left(u_0^{(1)}\right)^{\ell}\right]\\
        &+\Delta \left(\sum_{\stackrel{i_1+2i_2+\cdots+(\ell-1)i_{\ell-1}=\ell}{i_1\neq \ell}}d_{\ell}c_{m_{\ell}000}(x,t)u_0^{(1),i_1}\cdots u_0^{(\ell-1),i_{\ell-1}}\right)\\
        &+\Delta\left( \sum_{\stackrel{i_1+2i_2+\cdots+(\ell-1)i_{\ell-1}=\ell}{i_1\neq \ell}}d_{\ell}g^{(m_{\ell})}(x)u_0^{(1),i_1}\cdots u_0^{(\ell-1),i_{\ell-1}}\right), &&\text{in}\ Q,\\
        &u_0^{(\ell)}(x,0)=0,&&\text{in}\ \Omega,\\
        &u_{j,0}^{(\ell)}(x,t)\ \text{satisfies PBC,}&&\text{on}\ \Sigma.
    \end{aligned}
    \right.
\end{equation*}
where $m_{\ell} =\sum\limits_{j=1}^{\ell-1}i_j $, and $u_{1,0}^{(\ell)}$, $u_{2,0}^{(\ell)}$ satisfy the same measurement at $t=t_1$:
\begin{align*}
    u_{1,0}^{(\ell)}(x,t_1)=u_{2,0}^{(\ell)}(x,t_1),\partial_t u_{1,0}^{(\ell)}(x,t_1)=\partial_t u_{2,0}^{(\ell)}(x,t_1).
\end{align*}
Following the same argument, we obtain the identity
    \begin{align*}
         (g_{1,0}^{(\ell)}(x)-g^{(\ell)}_{2,0}(x))\left(u_0^{(1)}(x,t_1)\right)^{\ell}\equiv C\text{ a.e. in }\Omega,
    \end{align*}
which implies $g_{1,0}^{(\ell)}(x)=g^{(\ell)}_{2,0}(x)$  in $\Omega$. 

In conclusion, we have demonstrated the uniqueness of the coefficients $g^{(\ell)}$ within $\Omega$, where $1\leq \ell\leq n$ for arbitrary $n\ge 1$. This ends the proof of Theorem \ref{theo:IP1}. 
\begin{remark}\label{remark41}
For some phase-field models that have energy terms  independent of space and time, i.e., the coefficients $g^{(\ell)}, \ell \ge 1$ are constants, these coefficients can be explicitly determined. For example, the solution  for the coefficient $g^{(1)}$ can be found by
\begin{align*}
    g^{(1)} = \frac{\partial_t u_0^{(1)}(x_1,t_1)+c_1\Delta^2u_0^{(1)}(x_1,t_1)-\Delta (c_{1000}(x_1,t_1)u_0^{(1)}(x_1,t_1))}{\Delta u_0^{(1)}(x_1,t_1)},
\end{align*}
where $x_1 \in \Omega$ such that $\Delta u_0^{(1)}(x_1, t_1) \neq 0$. Moewover, if $f_0(x,t,\overrightarrow{u}) = f_0(\overrightarrow{u})$, we can explicitly invert $g^{(\ell)}$ by measuring  $u_0(x,t_1,\epsilon)$ only (i.e. without $\partial_t u_0(x,t_1,\epsilon)$). Let us consider $\ell = 1$ as an example. One can transform equation \eqref{equ:first order linearization} into 
\begin{align}
\label{equ:u_0^1}
\partial_tu_0^{(1)}=-c_1\Delta^2 u_0^{(1)}+\Delta\left(c_{1000}u_0^{(1)}+g^{(1)}u_0^{(1)}\right),
\end{align}
with initial value $\varphi^{(1)}_0$. Applying Fourier transform to both sides of equation \eqref{equ:u_0^1} yields
\begin{equation}
\label{equ:Fourier_tranform_u}
    \left\{
    \begin{aligned}
&\partial_t\mathcal{F}\left(u_0^{(1)}\right)=-\left(c_1|\xi|^4+c_{1000}|\xi|^2+g^{(1)}|\xi|^2\right) \mathcal{F}\left(u_0^{(1)}\right), \\
&\mathcal{F}\left(u_0^{(1)}\right)(\xi,0) = \mathcal{F}\left(\varphi^{(1)}_0\right)(\xi),
\end{aligned}
    \right.
\end{equation}
where $\mathcal{F}(\cdot)$ means the Fourier transform, and $\xi$ is frequency variables.  The solution to equation \eqref{equ:Fourier_tranform_u} is
\begin{align}
\label{equ:first order linearization solution}
    \mathcal{F}\left(u_0^{(1)}\right)(\xi,t) = \mathcal{F}\left(\varphi^{(1)}_0\right)(\xi)\exp\left[-\left(c_1|\xi|^4+c_{1000}|\xi|^2+g^{(1)}|\xi|^2\right) t\right].
\end{align}
Substituting $t = t_1$ in equation (39) yields
\begin{align*}
g^{(1)}=\ln\left(\frac{\mathcal{F}\left(\varphi^{(1)}_0\right)(\xi_1)}{\mathcal{F}\left(u_0^{(1)}\right)(\xi_1,t_1)\exp\left(c_1t_1|\xi_1|^4+c_{1000}t_1|\xi_1|^2\right)}\right)t_1^{-1}|\xi_1|^{-2},
\end{align*}
where $\xi_1\neq 0$ satisfies $\mathcal{F}\left(u_0^{(1)}\right)(\xi_1,t_1)\neq 0$. According to (39), we only need to set \(\varphi_0^{(1)}\) such that \(\mathcal{F}\left(\varphi^{(1)}_0\right)(\xi) \not\equiv 0\). For example, we can choose \(\varphi_0^{(1)} = 1+\cos(k \cdot x)\), where \(k\) is any non-zero vector in \(\mathbb{R}^d\). The other coefficients can be found by a similar fashion. 
\end{remark}

\subsection{Proof of uniqueness on (IP2)}
  In the  (IP2),  the coefficients $g^{(\ell)}$ depends on both $x$ and $t$ for all $\ell$. To determine the coefficients at time $t_j$ using the higher order linearization method, we follow a similar  approach as (IP1). Consider the measurement $\overrightarrow{M}_m(g,\overrightarrow{\varphi})$ given by
\begin{align*}
    \overrightarrow{M}_m(g,\overrightarrow{\varphi}) = [u_0(x,t_1,\epsilon),\partial_t u_0(x,t_1,\epsilon),\cdots,u_0(x,t_N,\epsilon),\partial_t u_0(x,t_N,\epsilon)],
\end{align*}
for $t_1,\cdots,t_N\in (0,T]$, so that the $\ell$th-order linearization of $\overrightarrow{M}_m(g,\overrightarrow{\varphi})$ is
\begin{align*}
    \overrightarrow{M}^{(\ell)}_m(g,\overrightarrow{\varphi})&=\partial^{\ell}_{\epsilon}\overrightarrow{M}_m(g,\overrightarrow{\varphi})\big|_{\epsilon=0} \\
    &= \left[\partial^{\ell}_{\epsilon}u_0(x,t_1,\epsilon)\big|_{\epsilon=0} ,\partial^{\ell}_{\epsilon}\partial_t u_0(x,t_1,\epsilon)\big|_{\epsilon=0},\cdots,\partial^{\ell}_{\epsilon}u_0(x,t_N,\epsilon)\big|_{\epsilon=0},\partial^{\ell}_{\epsilon}\partial_t u_0(x,t_N,\epsilon)\big|_{\epsilon=0}\right]\\
    &=\left[u_0^{(\ell)}(x,t_1),\partial_t u_0^{(\ell)}(x,t_1),\cdots,u_0^{(\ell)}(x,t_N),\partial_t u_0^{(\ell)}(x,t_N)\right],
\end{align*}
for $1\leq \ell\leq n$.


Suppose that the coefficients $g_{1}^{(1)}(x,t)$ and $g_{2}^{(1)}(x,t)$ determine the solutions $u_{1,0}^{(1)}$ and $u_{2,0}^{(1)}$, respectively, under the systems
\begin{equation*}
    \left\{
    \begin{aligned}
        &\partial_t u_{j,0}^{(1)}=-c_1\Delta^2 u_{j,0}^{(1)}+\Delta \left(g_j^{(1)}(x,t)u_{j,0}^{(1)}+c_{1000}(x,t)u_{j,0}^{(1)}\right),&&\text{in}\ Q,\\
        &u_{j,0}^{(1)}(x,0)=\varphi_0^{(1)}(x),&&\text{in}\ \Omega,\\
        &u_{j,0}^{(1)}(x,t)\ \text{satisties PBC,}&&\text{on}\ \Sigma.
    \end{aligned}
    \right.
\end{equation*}
for $j=1,2$. These two systems have the same measurement at $t=t_i$, that is, 
\begin{align*}
    u_{1,0}^{(1)}(x,t_i) = u_{2,0}^{(1)}(x,t_i),\,\,\,\,\, \partial_t u_{1,0}^{(1)}(x,t_i) = \partial_t u_{2,0}^{(1)}(x,t_i),\,\,\,\,\,i=1,\cdots,N.
\end{align*}
By employing the same procedure as in (IP1), we deduce that 
\begin{align*}
    g_1^{(1)}(x,t_i) = g_2^{(1)}(x,t_i)\text{ 
     in }\Omega,
\end{align*}
for $i=1,\cdots,N$. The uniqueness proof of $g^{(\ell)}, \ell\ge 2$, also follows the argument before, which gives 
\begin{align*}
    g_1^{(\ell)}(x,t_i) = g_2^{(\ell)}(x,t_i)\text{  in }\Omega,
\end{align*}
for $i=1,\cdots,N$. 

In the end, we can interpolate $g_1^{(\ell)}(x,t)$ and $g_2^{(\ell)}(x,t)$ with the $N$-th order polynomial with respect to time. Then based on the Cauchy remainder of polynomial interpolation \cite{AI48}, we deduce that
\begin{align*}
    \left\|g_1^{(\ell)}(x,t)-g_2^{(\ell)}(x,t)\right\|_{L^{\infty}(Q)}\leq \frac{C_{\ell}}{(N+1)!},\,\,\,\,\,\ (x,t)\in \Omega\times [0,T],
\end{align*}
where $C_{\ell}$ is determined by $\|\partial_t^{N+1}g_1^{(\ell)}\|_{L^{\infty}(Q)},\|\partial_t^{N+1}g_2^{(\ell)}\|_{L^{\infty}(Q)}$ and $T$.
In conclusion, Theorem \ref{theo:IP2} has been established.

\section{Conclusion}
In this work, we establish the existence and uniqueness of weak solutions for linear Cahn-Hilliard equations under the periodic boundary conditions. We also prove the local well-posedness of the Cahn-Hilliard-Allen-Cahn system. We then discuss two inverse problems related to the unique determination of nonlinear energy functions using different amount of measurement data, namely, the measurement of the concentration field at a fixed time point and the  measurements at multiple  time points within  the material. These measurements allow us to uniquely recover the nonlinear energy functions that are time-independent and time-dependent, respectively. Our method can be generalized to the inverse problems in phase-field models with other boundary conditions. 

However, there are still lots of challenges remaining. First, the inverse problems for simultaneously recovering all the energy terms in equation \eqref{equ:nonlinear system}, including $g$ and $f_i$, $i=0,1,2,3$, are still open. Second, it is difficult to measure the time derivative of the concentration field in general. Although we have discussed some special cases in remark \ref{remark41}, the uniqueness theory based on the field only (without time derivative) for general cases is still unknown. Third, the numerical implementation for recovering the energy terms is highly nontrivial, due to the nonlinearities in both forward and inverse problems. We leave these discussions for future work.

\appendix
\section{Proof on the uniqueness of system \eqref{equ:nonlinear system} under the zero initial condition}
Here we discuss the uniqueness of the solution for the phase-field system \eqref{equ:nonlinear system with zeros initial} under zero initial values. We only consider the uniqueness of solutions satisfying $u_0\in\mathcal{L}^{q+4}$ and $u_i\in \mathcal{L}^{q+2}$ for $i=1,2,3$, where $q\ge 0$ is given in Theorem \ref{theo:LWP}.
\begin{theorem}
	Let $\overrightarrow{u}=(u_0,u_1,u_2,u_3)\in \mathcal{L}^{q+4}\times \left(\mathcal{L}^{q+2}\right)^3$  satisfy the following equation
	\begin{equation}
		\label{equ:nonlinear system with zeros initial}
		\left\{
		\begin{aligned}
			&\partial_t u_0=-c_1\Delta^2u_0+\Delta \left(f_0(x,t,\overrightarrow{u})+g(x,t,u_0)\right),\ &&\text{in}\ Q,\\
			&\partial_t u_i = c_2\Delta u_i+f_i(x,t,\overrightarrow{u}),\ &&\text{in}\ Q,\\
			&u_0(x,0) = 0, &&\text{in}\ \Omega,\\
			&u_i(x,0) = 0, &&\text{in}\ \Omega,\\
			&u_0(x,t),u_i(x,t)\ \text{satisfy PBC}, &&\text{on}\ \Sigma,
		\end{aligned}
		\right.
	\end{equation}
	where $g$ and $f_i$, $i=0,1,2,3$, satisfy admissible conditions \ref{def:admissible 1}.  Then there exists a unique solution to equation \eqref{equ:nonlinear system with zeros initial}, which is $\overrightarrow{u} \equiv \overrightarrow{0}$.
	

\end{theorem}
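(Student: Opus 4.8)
The plan is to prove uniqueness through a single $L^2$ energy estimate combined with Grönwall's inequality, exploiting the fact that the admissible conditions force every nonlinearity to vanish at the origin and to be globally Lipschitz. First I would record the two consequences of Definition \ref{def:admissible 1} that drive the argument. Since the expansions of $g$ and of $f_i$ contain no zeroth-order term, we have $g(x,t,0)=0$ and $f_i(x,t,0)=0$ for $i=0,1,2,3$; combining these with the Lipschitz conditions in Definition \ref{def:admissible 1} gives the pointwise bounds $|g(x,t,u_0)|\le L|u_0|$ and $|f_i(x,t,\overrightarrow{u})|\le L_i|\overrightarrow{u}|$, hence the $L^2$ bounds $\|g(x,t,u_0)\|_{L_p^2(\Omega)}\le L\|u_0\|_{L_p^2(\Omega)}$ and $\|f_i(x,t,\overrightarrow{u})\|_{L_p^2(\Omega)}\le C\|\overrightarrow{u}\|_{L_p^2(\Omega)}$. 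Because the solution lies in $\mathcal{L}^{q+4}\times(\mathcal{L}^{q+2})^3$ with $q\ge 0$, we have in particular $u_0\in L^2(0,T;H_p^2(\Omega))$ and $u_i\in L^2(0,T;H_p^1(\Omega))$, which is more than enough regularity to justify the integrations by parts below, all boundary contributions vanishing under the PBC.

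Next I would derive the energy inequality for the Cahn–Hilliard component. Testing the $u_0$-equation in \eqref{equ:nonlinear system with zeros initial} against $u_0$ and integrating over $\Omega$, the biharmonic term yields $-c_1\int_\Omega u_0\Delta^2 u_0\,\mathrm{d}x=-c_1\|\Delta u_0\|_{L_p^2(\Omega)}^2$, while the nonlinear term rewrites as $\int_\Omega u_0\Delta(f_0+g)\,\mathrm{d}x=\int_\Omega \Delta u_0\,(f_0+g)\,\mathrm{d}x$, which needs only $u_0\in H_p^2(\Omega)$ and $f_0+g\in L_p^2(\Omega)$. Bounding the latter by Cauchy–Schwarz and then by Young's inequality with a small parameter, I would absorb the resulting $\|\Delta u_0\|_{L_p^2(\Omega)}^2$ into the favorable term from the biharmonic operator, arriving at
\begin{align*}
\frac{d}{dt}\|u_0\|_{L_p^2(\Omega)}^2+c_1\|\Delta u_0\|_{L_p^2(\Omega)}^2\le C\|\overrightarrow{u}\|_{L_p^2(\Omega)}^2.
\end{align*}
For each $u_i$, testing against $u_i$ gives $\tfrac{1}{2}\tfrac{d}{dt}\|u_i\|_{L_p^2(\Omega)}^2=-c_2\|\nabla u_i\|_{L_p^2(\Omega)}^2+\int_\Omega f_i u_i\,\mathrm{d}x$, and the Lipschitz bound produces $\tfrac{d}{dt}\|u_i\|_{L_p^2(\Omega)}^2\le C\|\overrightarrow{u}\|_{L_p^2(\Omega)}^2$.

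Summing the four inequalities and setting $E(t)=\|u_0(\cdot,t)\|_{L_p^2(\Omega)}^2+\sum_{i=1}^3\|u_i(\cdot,t)\|_{L_p^2(\Omega)}^2$, I would obtain
\begin{align*}
\frac{d}{dt}E(t)\le C\,E(t),\qquad E(0)=0,
\end{align*}
the initial value vanishing by the zero initial condition. Grönwall's inequality then forces $E\equiv 0$ on $[0,T]$, that is $\overrightarrow{u}\equiv\overrightarrow{0}$, which establishes the claim.

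I expect the only genuine subtlety to lie not in the absorption trick or in Grönwall, which are routine, but in rigorously justifying the energy identity $\tfrac{1}{2}\tfrac{d}{dt}\|u_0\|_{L_p^2(\Omega)}^2=\langle\partial_t u_0,u_0\rangle$ at the level of regularity available. The key observation is that $\partial_t u_0=-c_1\Delta^2 u_0+\Delta(f_0+g)$ belongs to $L^2(0,T;(H_p^2(\Omega))^*)$ whenever $u_0\in L^2(0,T;H_p^2(\Omega))$ and $f_0+g\in L^2(0,T;L_p^2(\Omega))$, so the Lions–Magenes lemma for the Gelfand triple $H_p^2(\Omega)\hookrightarrow L_p^2(\Omega)\hookrightarrow (H_p^2(\Omega))^*$ guarantees that $t\mapsto\|u_0(t)\|_{L_p^2(\Omega)}^2$ is absolutely continuous and that the duality pairing equals its time derivative; the analogous statement for the $u_i$ uses the triple $H_p^1(\Omega)\hookrightarrow L_p^2(\Omega)\hookrightarrow H_p^{-1}(\Omega)$. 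Once this point is secured, the proof is self-contained and invokes nothing beyond the admissible conditions and the regularity built into the solution class.
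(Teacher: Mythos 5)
Your proposal is correct and takes essentially the same route as the paper's proof: testing the $u_0$-equation and each $u_i$-equation against the solution itself, moving the Laplacian onto $u_0$ by parts, invoking the Lipschitz bounds together with $g(x,t,0)=f_i(x,t,0)=0$ from the admissible conditions, absorbing $\|\Delta u_0\|_{L_p^2(\Omega)}^2$ via Young's inequality, and closing with the integral form of Gr\"onwall's inequality. Your extra care in justifying the identity $\tfrac{1}{2}\tfrac{d}{dt}\|u_0\|_{L_p^2(\Omega)}^2=\langle\partial_t u_0,u_0\rangle$ through the Lions--Magenes lemma is a point the paper leaves implicit, but it does not change the argument.
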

\begin{remark}
Through condition 4 in the admissible conditions \ref{def:admissible 1}, we can derive the Lipschitz continuity of $f_i$ and $g$ with respect to $\overrightarrow{u}$ as 
\begin{align*}
    &\|g(x,t,u_0)\|_{L^{\infty}(0,T;L_p^2(\Omega))}\leq L\|u_0\|_{L^{\infty}(0,T;L_p^2(\Omega))},\\
    &\|f_i(x,t,\overrightarrow{u})\|_{L^{\infty}(0,T;L_p^2(\Omega))}\leq L_i\|\overrightarrow{u}\|_{L^{\infty}(0,T;L_p^2(\Omega))},\, i=0,1,2,3.
\end{align*}
For the derivation, let us take $g(x,t,u_0)$ as an example, which holds 
\begin{align*}
    \|g(x,t,u_0)\|^2_{L^{\infty}(0,T;L_p^2(\Omega))}&=\sup\limits_{t\in [0,T]}\int_{\Omega} |g(x,t,u_0)|^2 \mathrm{d}x\\
    &\leq \sup\limits_{t\in [0,T]} \int_{\Omega}L^2|u_0|^2 \mathrm{d}x\\
    &=L^2 \|u_0\|_{L^{\infty}(0,T;L_p^2(\Omega))},
\end{align*}
where we have made use of the fact $g(x,t,0) = 0$ by equation \eqref{equ:expansion g}.
\end{remark}
\begin{proof}
	In system \eqref{equ:nonlinear system with zeros initial}, we use $u_0$ and $u_i$  as test functions, respectively. After performing integration by parts, we obtain
	\begin{align*}
		&\int_{\Omega}u_0\partial_t u_0+c_1 (\Delta u_0)^2\mathrm{d}x=\int_{\Omega} \left(f_0(x,t,\overrightarrow{u})+g(x,t,u_0)\right)\Delta u_0\mathrm{d}x,\\
		&\int_{\Omega} u_i\partial_t u_i+c_2|\nabla u_i|^2\mathrm{d}x=\int_{\Omega}u_if_i(x,t,\overrightarrow{u})\mathrm{d}x,\, i=1,2,3.
	\end{align*} 
 Summing over these four equations yields
	\begin{align*}
		&\frac{1}{2}\frac{d}{dt}\left(\|u_0\|^2_{L_p^2(\Omega)}+\sum\limits_{i=1}^3\|u_i\|^2_{L_p^2(\Omega)}\right)+c_1\|\Delta u_0\|^2_{L_p^2(\Omega)}+c_2\sum\limits_{i=1}^3\|\nabla u_i\|^2_{L_p^2(\Omega)}\\
		= &\int_{\Omega} \left(f_0(x,t,\overrightarrow{u})+g(x,t,u_0)\right)\Delta u_0\mathrm{d}x+\sum\limits_{i=1}^3\int_{\Omega}u_if_i(x,t,\overrightarrow{u})\mathrm{d}x.
	\end{align*}
	The right-hand side can be bounded by
	\begin{align*}
		\int_{\Omega} &\left(f_0(x,t,\overrightarrow{u})+g(x,t,u_0)\right)\Delta u_0\mathrm{d}x\\
		&\leq \varepsilon_3\|f_0(x,t,\overrightarrow{u})+g(x,t,u_0)\|_{L_p^2(\Omega)}^2+\frac{1}{4\varepsilon_3}\|\Delta u_0\|_{L_p^2(\Omega)}^2\\
		&\leq 2\varepsilon_3\left (\|f_0(x,t,\overrightarrow{u})\|_{L_p^2(\Omega)}^2+\|g(x,t,u_0)\|_{L_p^2(\Omega)}^2\right)+\frac{1}{4\varepsilon_3}\|\Delta u_0\|_{L_p^2(\Omega)}^2\\
		&\leq C_1\varepsilon_3\sum\limits_{j=0}^3\|u_j\|^2_{L_p^2(\Omega)}+\frac{1}{4\varepsilon_3}\|\Delta u_0\|_{L_p^2(\Omega)}^2,
		\end{align*}
		and
		\begin{align*}
		&\int_{\Omega}u_if_i(x,t,\overrightarrow{u})\mathrm{d}x\\
		\leq & \varepsilon_4\|f_i(x,t,\overrightarrow{u})\|_{L_p^2(\Omega)}^2+\frac{1}{4\varepsilon_4} \|u_i\|_{L_p^2(\Omega)}^2\\
		\leq & C_2\varepsilon_4 \sum\limits_{j=0}^3\|u_j\|^2_{L_p^2(\Omega)}+\frac{1}{4\varepsilon_4} \|u_i\|_{L_p^2(\Omega)}^2,\ i=1,2,3,
	\end{align*}
	where $C_1$ and $C_2$ depend on $L$ and $L_i$,  and $\varepsilon_3,\varepsilon_4$ are arbitrary positive number. By setting $\varepsilon_3 = \varepsilon_4=1/(2c_1)$, one can derive that
	\begin{align*}
		&\frac{d}{dt}\left(\|u_0\|^2_{L_p^2(\Omega)}+\sum\limits_{i=1}^3\|u_i\|^2_{L_p^2(\Omega)}\right)+\|\Delta u_0\|^2_{L_p^2(\Omega)}+\sum\limits_{i=1}^3\|\nabla u_i\|^2_{L_p^2(\Omega)}\\
		\leq& C_3 \left(\|u_0\|^2_{L_p^2(\Omega)}+\sum\limits_{i=1}^3\|u_i\|^2_{L_p^2(\Omega)}\right).
	\end{align*}
	Integrating both sides over the interval $[0,s]$, we have
	\begin{align*}
		&\sum\limits_{j=0}^3 \|u_j(\cdot,s)\|^2_{L_p^2(\Omega)}+\|\Delta u_0\|^2_{L^2(0,s;L_p^2(\Omega))}+\sum\limits_{i=1} \|\nabla u_i\|^2_{L^2(0,s;L_p^2(\Omega))}\\\
		\leq &C_3\int_0^s\sum\limits_{j=0}^3\|u_j\|^2_{L_p^2(\Omega)}\mathrm{d}t.
	\end{align*}
	Finally, utilizing the integral form of Gronwall inequality, we conclude
	\begin{align*}
		\sum\limits_{j=0}^3 \|u_j(\cdot,s)\|^2_{L_p^2(\Omega)}=0,
	\end{align*}
	for any $s\in [0,T]$, which completes the proof.
\end{proof}

\end{document}